\documentclass[12pt]{amsart}

\usepackage{amstext}
\usepackage{amsthm}
\usepackage{amsmath}
\usepackage{amssymb}
\usepackage{latexsym}
\usepackage{amsfonts}
\usepackage{graphicx}
\usepackage{texdraw}
\usepackage{graphpap}

\usepackage[pagebackref,hypertexnames=false, colorlinks, citecolor=red, linkcolor=red]{hyperref} 
\usepackage[backrefs]{amsrefs}

\input txdtools

\bibliographystyle{plain}

\setlength{\evensidemargin}{0in}
\setlength{\oddsidemargin}{0in}
\setlength{\topmargin}{-.5in}
\setlength{\textheight}{9in}
\setlength{\textwidth}{6.5in}

\begin{document}


\title[Thin Sequences, Model Spaces, and Douglas Algebras]{Thin Sequences and Their Role in Model Spaces and Douglas Algebras}


\author[P. Gorkin]{Pamela Gorkin$^\dagger$}
\address{Pamela Gorkin, Department of Mathematics\\ Bucknell University\\  Lewisburg, PA  USA 17837}
\email{pgorkin@bucknell.edu}
\thanks{$\dagger$ Research supported in part by Simons Foundation Grant 243653}

\author[B.D.Wick]{Brett D. Wick$^\ddagger$}
\address{Brett D. Wick, School of Mathematics\\ Georgia Institute of Technology\\ 686 Cherry Street\\ Atlanta, GA USA 30332-0160}
\email{wick@math.gatech.edu}
\thanks{$\ddagger$ Research supported in part by a National Science Foundation DMS grant \# 0955432.}


\subjclass[2010]{Primary: 46E22.  Secondary:  30D55, 47A, 46B15.}


\keywords{reproducing kernel, thin sequences, interpolation, asymptotic orthonormal sequence}

%
%

\newcommand{\ci}[1]{_{ {}_{\scriptstyle #1}}}

\newcommand{\norm}[1]{\ensuremath{\left\|#1\right\|}}
\newcommand{\abs}[1]{\ensuremath{\left\vert#1\right\vert}}
\newcommand{\p}{\ensuremath{\partial}}
\newcommand{\pr}{\mathcal{P}}

\newcommand{\pbar}{\ensuremath{\bar{\partial}}}
\newcommand{\db}{\overline\partial}
\newcommand{\D}{\mathbb{D}}
\newcommand{\B}{\mathbb{B}}
\newcommand{\Sp}{\mathbb{S}}
\newcommand{\T}{\mathbb{T}}
\newcommand{\R}{\mathbb{R}}
\newcommand{\Z}{\mathbb{Z}}
\newcommand{\C}{\mathbb{C}}
\newcommand{\N}{\mathbb{N}}
\newcommand{\Hc}{\mathcal{H}}
\newcommand{\scrL}{\mathcal{L}}
\newcommand{\td}{\widetilde\Delta}

\newcommand{\CC}{\mathcal{C}}
\newcommand{\EE}{\mathcal{E}}
\newcommand{\RR}{\mathcal{R}}
\newcommand{\TT}{\mathcal{T}}

\newcommand{\La}{\langle }
\newcommand{\Ra}{\rangle }
\newcommand{\rk}{\operatorname{rk}}
\newcommand{\card}{\operatorname{card}}
\newcommand{\ran}{\operatorname{Ran}}
\newcommand{\osc}{\operatorname{OSC}}
\newcommand{\im}{\operatorname{Im}}
\newcommand{\re}{\operatorname{Re}}
\newcommand{\tr}{\operatorname{tr}}
\newcommand{\vf}{\varphi}

\renewcommand{\qedsymbol}{$\Box$}
\newtheorem{thm}{Theorem}[section]
\newtheorem{lm}[thm]{Lemma}
\newtheorem{cor}[thm]{Corollary}
\newtheorem{conj}[thm]{Conjecture}
\newtheorem{prob}[thm]{Problem}
\newtheorem{prop}[thm]{Proposition}
\newtheorem*{prop*}{Proposition}
\newtheorem{defin}[thm]{Definition}
\theoremstyle{remark}
\newtheorem{rem}[thm]{Remark}
\newtheorem*{rem*}{Remark}

\numberwithin{equation}{section}


%
%

\begin{abstract}
We study thin interpolating sequences $\{\lambda_n\}$ and their relationship to interpolation in the Hardy space $H^2$ and the model spaces $K_\Theta = H^2 \ominus \Theta H^2$, where $\Theta$ is an inner function. Our results, phrased in terms of the functions that do the interpolation as well as Carleson measures, show that under the assumption that $\Theta(\lambda_n) \to 0$ the interpolation properties in $H^2$ are essentially the same as those in $K_\Theta$. 
\end{abstract}

%
%

\maketitle

\section{Introduction and Motivation}

A sequence $\{\lambda_j\}_{j=1}^\infty$ is an \textnormal{interpolating sequence} for $H^\infty$, the space of bounded analytic functions, if for every $w\in\ell^\infty$ there is a function $f\in H^\infty$ such that $$f(z_j) = w_j, ~\mbox{for all}~ j\in\N.$$ Carleson's interpolation theorem says that $\{\lambda_j\}_{j=1}^\infty$ is an interpolating sequence for $H^\infty$ if and only if 
\begin{equation}
\label{Interp_Cond}
\delta = \inf_{j}\delta_j:=\inf_j \left\vert B_j(\lambda_j)\right\vert=\inf_{j}\prod_{k \ne j} \left|\frac{\lambda_j - \lambda_k}{1 - \overline{\lambda}_j \lambda_k}\right| > 0,
\end{equation}
where
$$
B_j(z):=\prod_{k\neq j}\frac{-\overline{\lambda_k}}{\abs{\lambda_k}}\frac{z-\lambda_k}{1-\overline{\lambda}_kz}
$$
denotes the Blaschke product vanishing on the set of points $\{\lambda_k:k\neq j\}$.

In this paper, we  consider sequences that (eventually) satisfy a stronger condition than \eqref{Interp_Cond}. A sequence $\{\lambda_j\}\subset\D$ is \textit{thin} if 
$$
\lim_{j\to\infty}\delta_j:=\lim_{j\to\infty}\prod_{k\neq j}\left\vert\frac{\lambda_j-\lambda_k}{1-\overline{\lambda}_k\lambda_j}\right\vert=1.
$$

Thin sequences are of interest not only because functions solving interpolation for thin interpolating sequences have good bounds on the norm, but also because they are interpolating sequences for a very small algebra: the algebra $QA =  VMO \cap H^\infty$, where $VMO$ is the space of functions on the unit circle with vanishing mean oscillation \cite{W}.

Continuing work in \cite{CFT} and \cite{GPW}, we are interested in understanding these sequences in different settings. This will require two definitions that are motivated by the work of Shapiro and Shields, \cite{SS}, in which they gave the appropriate conditions for a sequence to be interpolating for the Hardy space $H^2$. 

Considering more general Hilbert spaces will require the introduction of reproducing kernels:  In a reproducing kernel Hilbert space $\mathcal{H}$ (see \cite[p. 17]{AM}) we let $K_{\lambda_n}$ denote the kernel corresponding to the point $\lambda_n$; that is, for each function in the Hilbert space we have that $f(\lambda_n)=\left\langle f, K_{\lambda_n}\right\rangle_{\mathcal{H}}$.   If we have an $\ell^2$ sequence $a = \{a_n\}$, we define $$\|a\|_{N, \ell^2} = \left(\sum_{j \ge N} |a_j|^2\right)^{1/2}.$$ The concepts of interest are the following.

A sequence $\{\lambda_n\}\subset\Omega \subseteq \mathbb{C}^n$ is said to be {\it an eventual $1$-interpolating sequence for a reproducing kernel Hilbert space $\mathcal{H}$}, denoted $EIS_{\mathcal{H}}$,  if for every $\varepsilon > 0$ there exists $N$ such that for each $\{a_n\} \in \ell^2$ there exists $f_{N, a} \in \mathcal{H}$ with
$$f_{N, a}(\lambda_n) \norm{K_{\lambda_n}}_{\mathcal{H}}^{-1}=f_{N, a}(\lambda_n) K_{\lambda_n}(\lambda_n)^{-\frac{1}{2}} = a_n ~\mbox{for}~ n \ge N ~\mbox{and}~ \|f_{N, a}\|_{\mathcal{H}} \le (1 + \varepsilon) \|a\|_{N, \ell^2}.$$
A sequence $\{\lambda_n\}$ is said to be a {\it  strong asymptotic interpolating sequence for $\mathcal{H}$}, denoted $AIS_{\mathcal{H}}$, if for all $\varepsilon > 0$ there exists $N$ such that for all sequences $\{a_n\} \in \ell^2$ there exists a function $G_{N, a} \in \mathcal{H}$ such that $\|G_{N, a}\|_\mathcal{H} \le \|a\|_{N,\ell^2}$ and 
$$\|\{G_{N, a}(\lambda_n) K_{\lambda_n}(\lambda_n)^{-\frac{1}{2}}  - a_n\}\|_{N, \ell^2} < \varepsilon \|a\|_{N, \ell^2}.$$ 

 Given a (nonconstant) inner function $\Theta$, we are interested in these sequences in model spaces;  we define the model space for $\Theta$ an inner function by $K_\Theta = H^2 \ominus \Theta H^2$. The reproducing kernel in $K_\Theta$ for $\lambda_0 \in \mathbb{D}$ is 
$$
K_{\lambda_0}^\Theta(z) = \frac{1 - \overline{\Theta(\lambda_0)}{\Theta(z)}}{1 - \overline{\lambda_0}z}
$$ 
and the normalized reproducing kernel is 
$$
k_{\lambda_0}^\Theta(z) = \sqrt{\frac{1 - |\lambda_0|^2}{1 - |\Theta(\lambda_0)|^2}} K_{\lambda_0}^\Theta(z).
$$
Finally, note that $$K_{\lambda_0} = K_{\lambda_0}^\Theta + \Theta \overline{\Theta(\lambda_0)}K_{\lambda_0}.$$
We let $P_\Theta$ denote the orthogonal projection of $H^2$ onto $K_\Theta$. 

We consider thin sequences in these settings as well as in Douglas algebras: Letting $L^\infty$ denote the algebra of essentially bounded measurable functions on the unit circle, a Douglas algebra is a closed subalgebra of $L^\infty$ containing $H^\infty$. It is a consequence of work of Chang and Marshall that a Douglas algebra $\mathcal{B}$ is equal to the closed algebra generated by $H^\infty$ and the conjugates of the interpolating Blaschke products invertible in $\mathcal{B}$, \cites{C, M}.

In this paper, we continue work started in \cite{GM} and \cite{GPW} investigating the relationship between thin sequences, $EIS_{\mathcal{H}}$ and $AIS_{\mathcal{H}}$ where $\mathcal{H}$ is a model space or the Hardy space $H^2$.   In Section~\ref{HSV}, we consider the notion of eventually interpolating and asymptotic interpolating sequences in the model space setting. We show that in reproducing kernel Hilbert spaces of analytic functions on domains in $\mathbb{C}^n$, these two are the same. Given results in \cite{GPW}, this is not surprising and the proofs are similar to those in the $H^\infty$ setting. We then turn to our main result of that section. If we have a Blaschke sequence $\{\lambda_n\}$ in $\mathbb{D}$ and assume that our inner function $\Theta$ satisfies $|\Theta(\lambda_n)| \to 0$, then a sequence $\{\lambda_n\}$ is an $EIS_{K_\Theta}$ sequence if and only if it is an $EIS_{H^2}$ sequence (and therefore $AIS_{K_\Theta}$ sequence if and only if it is an $AIS_{H^2}$).  In Section~\ref{CMMS} we rephrase these properties in terms of the Carleson embedding constants on the model spaces.
Finally, in Section~\ref{asip_algebra}, we recall the definition of Douglas algebras and show that appropriate definitions and conditions are quite different in that setting.

\section{Preliminaries}
Recall that a sequence $\{x_n\}$ in $\Hc$ 
 is {\it complete} if $~\mbox{Span}\{x_n: n \ge 1\} = \mathcal{H}$,  and
{\it asymptotically orthonormal} ($AOS$) if there exists $N_0$ such that for all $N \ge N_0$ there are positive constants $c_N$ and $C_N$ such that 
\begin{eqnarray}
\label{thininequality} 
c_N \sum_{n \ge N} |a_n|^2 \le \left\|\sum_{n \ge N} a_n x_n\right\|^2_{\Hc} \le C_N \sum_{n \ge N} |a_n|^2,
\end{eqnarray}
where $c_N \to 1$ and $ C_N \to 1$ as $N \to \infty$. If we can take $N_0 = 1$, the sequence is said to be an $AOB$; this is equivalent to being $AOS$ and a Riesz sequence.  Finally, the Gram matrix corresponding to $\{x_j\}$ is the matrix $G = \left(\langle x_n, x_m \rangle\right)_{n, m \ge 1}$.
\bigskip

It is well known that if $\{\lambda_n\}$ is a Blaschke sequence with simple zeros and corresponding Blaschke product $B$, then $\{k_{\lambda_n}\}$, where $$k_{\lambda_n}(z)=\frac{(1-\left\vert \lambda_n\right\vert^2)^{\frac{1}{2}}}{(1-\overline{\lambda_n}z)},$$ is a complete minimal system in $K_B$ and we also know that $\{\lambda_n\}$ is interpolating if and only if $\{k_{\lambda_n}\}$ is a Riesz basis.  The following beautiful theorem provides the connection to thin sequences.

\begin{thm}[Volberg, \cite{V}*{Theorem 2}] 
\label{Volberg} 
The following are equivalent:
\begin{enumerate}
\item$\{\lambda_n\}$ is a thin interpolating sequence;
\item
The sequence $\{k_{\lambda_n}\}$ is a complete $AOB$ in $K_B$;
\item
There exist a separable Hilbert space $\mathcal{K}$, an orthonormal basis $\{e_n\}$ for $\mathcal{K}$ and $U, K: \mathcal{K} \to K_B$, $U$ unitary, $K$ compact, $U + K$ invertible, such that 
$$(U + K)(e_n) = k_{\lambda_n} \text{ for all } n \in \N.$$
\end{enumerate}
 \end{thm}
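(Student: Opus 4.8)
The plan is to prove the cyclic chain of implications $(1)\Rightarrow(2)\Rightarrow(3)\Rightarrow(1)$, since $(3)\Rightarrow(2)$ is essentially a restatement (an $AOB$ is exactly the image of an orthonormal basis under an operator of the form unitary-plus-compact that remains invertible) and $(2)\Rightarrow(1)$ then follows from the equivalence of the Riesz basis property with interpolation. For $(1)\Rightarrow(2)$ I would start from the hypothesis $\delta_n\to 1$. The key computation is the inner product of two normalized reproducing kernels: $\langle k_{\lambda_n},k_{\lambda_m}\rangle = \frac{(1-|\lambda_n|^2)^{1/2}(1-|\lambda_m|^2)^{1/2}}{1-\overline{\lambda_m}\lambda_n}$, whose modulus is $(1-\rho(\lambda_n,\lambda_m)^2)^{1/2}$ where $\rho$ is the pseudohyperbolic distance. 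Since $\prod_{k\ne n}\rho(\lambda_n,\lambda_k)=\delta_n\to 1$, each individual factor $\rho(\lambda_n,\lambda_m)\to 1$ as $\min(n,m)\to\infty$, and in fact one gets control on the row sums $\sum_{m\ne n}(1-\rho(\lambda_n,\lambda_m)^2)$ tending to $0$; this is the standard fact that a thin sequence is "asymptotically separated." Concretely, from $\prod_{k\ne n}\rho_{nk}\ge\delta_n$ one derives $\sum_{k\ne n}(1-\rho_{nk}^2)\le -2\log\delta_n\to 0$ (using $1-x^2\le -2\log x$ for $x\in(0,1]$, or a comparable elementary inequality). Hence the off-diagonal entries of the Gram matrix $G=(\langle k_{\lambda_n},k_{\lambda_m}\rangle)$ satisfy $\sum_{m\ne n,\,m,n\ge N}|G_{nm}|^2\to 0$ and even the tail Schur norms go to $0$.

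Next I would use a Schur-test / Gershgorin argument on the truncated Gram matrix $G_N=(\langle k_{\lambda_n},k_{\lambda_m}\rangle)_{n,m\ge N}$. Writing $G_N=I+E_N$ where $E_N$ has zero diagonal, the bound $\|E_N\|\le \sup_{n\ge N}\sum_{m\ge N,\,m\ne n}|G_{nm}|$ (Schur test, symmetric case) tends to $0$ as $N\to\infty$. Therefore $\|G_N-I\|=:\eta_N\to 0$, which gives exactly $(1-\eta_N)\|a\|^2\le \|\sum_{n\ge N}a_n k_{\lambda_n}\|^2\le(1+\eta_N)\|a\|^2$, i.e. the defining inequality of an $AOS$ with $c_N=1-\eta_N\to 1$, $C_N=1+\eta_N\to 1$. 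Completeness of $\{k_{\lambda_n}\}$ in $K_B$ for a Blaschke sequence with simple zeros is the well-known fact quoted in the Preliminaries, so $\{k_{\lambda_n}\}$ is a complete $AOS$; to upgrade to $AOB$ (i.e. to allow $N_0=1$) one notes that a thin sequence is in particular an interpolating sequence (the infimum $\delta=\inf_n\delta_n$ is positive since $\delta_n\to 1$), hence $\{k_{\lambda_n}\}$ is a Riesz sequence, and a Riesz sequence that is $AOS$ is by definition an $AOB$. This settles $(1)\Rightarrow(2)$.

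For $(2)\Rightarrow(3)$ I would build the operators abstractly from the Gram matrix. Let $\mathcal K=\ell^2$ with standard basis $\{e_n\}$; the map $V:e_n\mapsto k_{\lambda_n}$ extends to a bounded invertible operator $\mathcal K\to K_B$ precisely because $\{k_{\lambda_n}\}$ is a Riesz basis (this is the $AOB$ hypothesis). Then $V^*V=G$ acting on $\ell^2$, and by the $AOS$ estimates with $N_0=1$ refined by the tail behavior, $G=I+K_0$ with $K_0$ compact: indeed $G-I$ is the limit in operator norm of its finite-rank truncations, because the tail Schur norms vanish, so $G-I$ is compact. Writing the polar decomposition $V=U|V|=U(G)^{1/2}$ with $U$ a unitary (surjectivity of $V$ makes the partial isometry a genuine unitary), and noting $(G)^{1/2}=I+K_1$ with $K_1=(I+K_0)^{1/2}-I$ compact (continuous functional calculus applied to the compact perturbation, since $t\mapsto\sqrt{1+t}-1$ is continuous and vanishes at $0$), we get $V=U+UK_1=U+K$ with $K:=UK_1$ compact and $U+K=V$ invertible. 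This produces the data in $(3)$. Finally $(3)\Rightarrow(1)$: if $k_{\lambda_n}=(U+K)e_n$ with $U$ unitary, $K$ compact, $U+K$ invertible, then the Gram matrix $G=(U+K)^*(U+K)=I+(U^*K+K^*U+K^*K)$ is a compact perturbation of the identity, so its off-diagonal entries tend to $0$ along any subsequence going to infinity and its diagonal entries ($=1$) are exact; reading off $|\langle k_{\lambda_n},k_{\lambda_m}\rangle|=(1-\rho(\lambda_n,\lambda_m)^2)^{1/2}\to 1$ for $m\ne n$ with $\min(m,n)\to\infty$, together with the Riesz-basis property (from invertibility of $U+K$) giving $\inf_n\delta_n>0$, one recovers via the product formula for $\delta_n$ that $\delta_n\to 1$; that $\{k_{\lambda_n}\}$ being a Riesz basis forces $\{\lambda_n\}$ interpolating completes the equivalence.

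The main obstacle I anticipate is the passage between the "product" condition $\delta_n\to1$ and the "sum" condition $\sum_{k\ne n}(1-\rho(\lambda_n,\lambda_k)^2)\to0$ in a form strong enough to conclude that the \emph{tail Schur norm} $\sup_{n\ge N}\sum_{m\ge N}|G_{nm}|$ (not merely each row sum individually) tends to $0$ as $N\to\infty$; this requires a careful uniformity argument — essentially showing that $-\log\delta_n$ controls the whole row uniformly — and it is exactly the place where one uses that thinness is a statement about \emph{every} tail, not a single limit. The compactness upgrade ($G-I$ compact, not merely small) and the functional-calculus step for $(G)^{1/2}-I$ are then routine once that uniformity is in hand.
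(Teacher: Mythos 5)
The paper itself gives no proof of this theorem --- it is quoted from Volberg's paper --- so your proposal cannot be compared against the authors' argument. Judged on its own merits, your plan has one genuine gap, in the key implication $(1)\Rightarrow(2)$, and you have in fact misdiagnosed where it lies.

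The trouble is the Schur test. Thinness, via the elementary inequality $1-x^2\le -2\log x$ and the monotonicity $\prod_{k\ge N,\,k\ne n}\rho_{nk}\ge\delta_n$, does give
$$\sup_{n\ge N}\sum_{m\ge N,\ m\ne n}\bigl(1-\rho(\lambda_n,\lambda_m)^2\bigr)=\sup_{n\ge N}\sum_{m\ge N,\ m\ne n}|G_{nm}|^2\ \longrightarrow\ 0,$$
uniformly in $n$, so the \emph{uniformity} you worried about is not the obstruction. The obstruction is the power. The Schur/Gershgorin bound you invoke requires $\sup_{n\ge N}\sum_{m\ge N,\,m\ne n}|G_{nm}|\to 0$, i.e.\ the first power, and nothing in the product condition on $\delta_n$ gives that: controlling the $\ell^2$ norm of each row of $G-I$ controls $\|G-I\|_{\ell^2\to\ell^\infty}$, not $\|G-I\|_{\ell^2\to\ell^2}$. (The intermediate assertion that the double sum $\sum_{m\ne n,\ m,n\ge N}|G_{nm}|^2\to 0$ is also unjustified --- from row estimates you only get the supremum of single sums, not the Hilbert--Schmidt norm of the tail block, which may well be infinite.) So the step ``$\|E_N\|\le \sup_n\sum_m|G_{nm}|\to 0$'' is exactly the unproved claim, and it is not merely a technicality: the correct route runs through the Carleson embedding constants of the tail measures $\mu_N=\sum_{k\ge N}(1-|\lambda_k|^2)\delta_{\lambda_k}$, namely $\|G_N\|\le\CC(\mu_N)$ together with the nontrivial fact that thinness forces $\CC(\mu_N)\to 1$ (this is precisely the content of item~(5) in Theorem~3.4 of the paper, citing \cite{GPW}*{Theorem 4.5}), plus the matching lower bound $c_N\to1$ from the tail interpolation constant; the duality $\|\sum_{n\ge N}a_n k_{\lambda_n}\|^2\le\CC(\mu_N)\|a\|^2_{N,\ell^2}$ is the argument that actually closes $(1)\Rightarrow(2)$. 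Your bound on the squared off-diagonal row sums is the right raw estimate and in fact does good work in the direction $(3)\Rightarrow(1)$: since $G-I$ compact forces $\|(G-I)e_n\|^2=\sum_{m\ne n}|G_{nm}|^2\to0$, and since the Riesz--basis constant bounds $\rho_{nk}$ uniformly away from $0$ so that $-\log\rho\le C(1-\rho^2)$, you do recover $-\log\delta_n\to0$. (You also have a small typo there: the off-diagonal entries go to $0$, so it is $\rho(\lambda_n,\lambda_m)\to1$, not $|\langle k_{\lambda_n},k_{\lambda_m}\rangle|\to1$.) Finally, the $(2)\Leftrightarrow(3)$ equivalence is cleanest if you simply invoke Proposition~2.2 of this paper (Volberg's Theorem~3), which says AOB is equivalent to the Gram matrix being $I+\text{compact}$ and left-invertible; your polar-decomposition and square-root functional calculus are fine for producing $U+K$ once compactness of $G-I$ is known, but again you justify that compactness by the same unproved Schur-norm claim, whereas the correct argument splits $G-I$ into a finite-rank piece (first $N$ rows and columns) plus a tail block whose operator norm is bounded by $\max(C_N-1,1-c_N)\to0$, directly from the AOB inequalities.
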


 In \cite{F}*{Section 3} and \cite{CFT}*{Proposition 3.2}, the authors note that \cite{V}*{Theorem 3} implies the following.

\begin{prop}\label{propCFT} Let $\{x_n\}$ be a sequence in $\Hc$. The following are equivalent:
\begin{enumerate}
\item $\{x_n\}$ is an AOB;
\item There exist a separable Hilbert space $\mathcal{K}$, an orthonormal basis $\{e_n\}$ for $\mathcal{K}$ and $U, K: \mathcal{K} \to \mathcal{H}$, $U$ unitary, $K$ compact, $U + K$ left invertible, such that 
$$(U + K)(e_n) = x_n;$$
\item The Gram matrix $G$ associated to $\{x_n\}$ defines a bounded invertible operator of the form $I + K$ with $K$ compact.
\end{enumerate} 
\end{prop}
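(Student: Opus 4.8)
The plan is to prove the three equivalences in a cycle $(1)\Rightarrow(2)\Rightarrow(3)\Rightarrow(1)$, since Proposition~\ref{propCFT} is essentially a repackaging of Volberg's Theorem~3 (the general, not-necessarily-complete analogue of Theorem~\ref{Volberg}) in the language of Gram matrices and compact perturbations. The implication $(1)\Rightarrow(2)$ is the content of \cite{V}*{Theorem 3} once one observes that the defining inequalities \eqref{thininequality} with $N_0=1$ and $c_N\to 1$, $C_N\to 1$ say precisely that the operator $T\colon \ell^2\to\Hc$, $Te_n=x_n$, is bounded below and above with bounds tending to $1$ on the tails; Volberg's construction then produces the unitary $U$ and compact $K$ with $U+K$ left invertible and $(U+K)e_n=x_n$.

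For $(2)\Rightarrow(3)$, I would compute the Gram matrix directly from the factorization. If $x_n=(U+K)e_n$, then
$$\langle x_n,x_m\rangle = \langle (U+K)e_n,(U+K)e_m\rangle = \langle (U+K)^*(U+K)e_n,e_m\rangle,$$
so $G=(U+K)^*(U+K)=U^*U+U^*K+K^*U+K^*K=I+(U^*K+K^*U+K^*K)$. Since $K$ is compact and $U$, $U^*$ are bounded, the operator $K':=U^*K+K^*U+K^*K$ is compact, so $G=I+K'$ with $K'$ compact. Boundedness of $G$ is immediate; invertibility follows because left invertibility of $U+K$ forces $(U+K)^*(U+K)$ to be bounded below, and a bounded-below self-adjoint operator of the form $I+\text{(compact)}$ has closed range and trivial kernel, hence (being self-adjoint) dense range, hence is invertible.

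For $(3)\Rightarrow(1)$, suppose $G=I+K$ with $K$ compact, $G$ invertible. Write $G=T^*T$ where $T\colon\ell^2\to\Hc$ is the synthesis operator $Te_n=x_n$ (this requires first checking $T$ is bounded, which follows from boundedness of $G$ on the diagonal blocks, or one can take $G$ bounded as given and build $T$ via $G^{1/2}$ composed with an isometry). Then for any finitely supported $a=\{a_n\}$ one has $\|\sum a_n x_n\|^2=\langle Ga,a\rangle=\|a\|^2+\langle Ka,a\rangle$. Let $P_N$ be the projection onto coordinates $\ge N$. Compactness of $K$ gives $\|P_N K P_N\|\to 0$ as $N\to\infty$; setting $\varepsilon_N:=\|P_N K P_N\|$, for $a$ supported in $\{n\ge N\}$ we get $(1-\varepsilon_N)\|a\|^2\le\|\sum a_n x_n\|^2\le(1+\varepsilon_N)\|a\|^2$, which is \eqref{thininequality} with $c_N=1-\varepsilon_N\to 1$ and $C_N=1+\varepsilon_N\to 1$; Riesz-sequence-ness (i.e. the $N_0=1$ bound) follows from invertibility of $G$. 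Thus $\{x_n\}$ is an AOB.

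The main obstacle, and the step deserving the most care, is the passage $(1)\Rightarrow(2)$: this is not a formal manipulation but genuinely invokes Volberg's deep construction of the unitary-plus-compact factorization, so in a self-contained write-up one would either cite \cite{V}*{Theorem 3} verbatim or reproduce the argument (diagonalize the self-adjoint compact $K$ in $G=I+K$, take $U$ to be the synthesis operator composed with the polar-decomposition unitary, etc.). Everything else is routine operator algebra: the symmetry between ``left invertible'' in (2) and ``invertible'' for the self-adjoint $G$ in (3), and the translation of ``compact'' into ``tail norms vanish,'' which is exactly what turns a Riesz-basis-type estimate into the asymptotic estimate \eqref{thininequality}.
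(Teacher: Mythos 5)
Your proposal is correct. The paper does not actually supply a proof of Proposition~\ref{propCFT}: it simply records that the result follows from \cite{V}*{Theorem 3} as observed in \cite{F}*{Section 3} and \cite{CFT}*{Proposition 3.2}. Your cyclic argument $(1)\Rightarrow(2)\Rightarrow(3)\Rightarrow(1)$ fills in exactly the content those citations point to: the hard step $(1)\Rightarrow(2)$ genuinely rests on Volberg's construction (as you acknowledge), while $(2)\Rightarrow(3)$ is the short computation $G=(U+K)^*(U+K)=I+\bigl(U^*K+K^*U+K^*K\bigr)$ with the compact perturbation visible, and $(3)\Rightarrow(1)$ is the tail-compactness estimate $\|P_NKP_N\|\to 0$ together with invertibility of $G$ to secure the $N_0=1$ (Riesz-sequence) bound. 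One small cosmetic point in $(3)\Rightarrow(1)$: for small $N$ the quantity $1-\varepsilon_N$ may fail to be positive, so one should take $c_N=\max\bigl(1-\varepsilon_N,\ \|G^{-1}\|^{-1}\bigr)$; you implicitly address this by invoking invertibility of $G$ for the $N_0=1$ bound, but it is worth making explicit.
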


We also have the following, which we will use later in this paper.

\begin{prop}[Proposition 5.1, \cite{CFT}]\label{prop5.1CFT}
If $\{\lambda_n\}$ is a sequence of distinct points in $\mathbb{D}$ and $\{k_{\lambda_n}^\Theta\}$ is an $AOS$, then $\{\lambda_n\}$ is a thin interpolating sequence. \end{prop}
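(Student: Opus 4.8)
The plan is to transfer the hypothesis from the model--space kernels $k_{\lambda_n}^\Theta$ to the normalized reproducing kernels $k_{\lambda_n}$ of $H^2$, and then to verify directly that $\delta_n\to 1$ and $\inf_n\delta_n>0$. The bridge is an elementary identity: writing $k_{\lambda_0}^\Theta=\bigl(1-|\Theta(\lambda_0)|^2\bigr)^{-1/2}\bigl(1-\overline{\Theta(\lambda_0)}\,\Theta\bigr)k_{\lambda_0}$ and using the reproducing property in $K_\Theta$, one obtains
\[
\langle k_{\lambda_n}^\Theta,k_{\lambda_m}^\Theta\rangle=\frac{1-\overline{\Theta(\lambda_n)}\,\Theta(\lambda_m)}{\sqrt{\bigl(1-|\Theta(\lambda_n)|^2\bigr)\bigl(1-|\Theta(\lambda_m)|^2\bigr)}}\;\langle k_{\lambda_n},k_{\lambda_m}\rangle .
\]
Because $|1-\overline{\Theta(\lambda_n)}\Theta(\lambda_m)|^2-\bigl(1-|\Theta(\lambda_n)|^2\bigr)\bigl(1-|\Theta(\lambda_m)|^2\bigr)=|\Theta(\lambda_n)-\Theta(\lambda_m)|^2\ge 0$ (equivalently, $\Theta$ contracts the pseudohyperbolic metric, by Schwarz--Pick; equivalently $|\langle k_{\Theta(\lambda_n)},k_{\Theta(\lambda_m)}\rangle|\le 1$), the modulus of the scalar factor is at least $1$, and hence
\[
|\langle k_{\lambda_n},k_{\lambda_m}\rangle|\;\le\;|\langle k_{\lambda_n}^\Theta,k_{\lambda_m}^\Theta\rangle|\qquad(n,m\ge 1).
\]
Writing $\rho(a,b)=\bigl|\tfrac{a-b}{1-\overline b a}\bigr|$ and recalling $|\langle k_a,k_b\rangle|^2=1-\rho(a,b)^2$, this bounds $\delta_n^2=\prod_{k\ne n}\bigl(1-|\langle k_{\lambda_n},k_{\lambda_k}\rangle|^2\bigr)$ from below by the corresponding model--space quantities.

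Next I unpack the $AOS$ hypothesis. It yields $N_0$ so that, for each $N\ge N_0$, the Gram operator $G_N^\Theta$ of $\{k_{\lambda_n}^\Theta\}_{n\ge N}$ on $\ell^2$ satisfies $c_N I\le G_N^\Theta\le C_N I$ with $c_N,C_N\to 1$; hence $\|G_N^\Theta-I\|\to 0$, and the upper bound shows that $\{k_{\lambda_n}^\Theta\}_{n\ge N_0}$ is a Bessel sequence. The first of these gives, for every $n\ge N$,
\[
\sum_{\substack{k\ge N\\ k\ne n}}|\langle k_{\lambda_n}^\Theta,k_{\lambda_k}^\Theta\rangle|^2=\bigl\|(G_N^\Theta-I)e_n\bigr\|^2\le\|G_N^\Theta-I\|^2 ,
\]
whose right side tends to $0$ as $N\to\infty$, uniformly in $n\ge N$. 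The second gives, for each fixed $k$, by applying Bessel's inequality to the vector $k_{\lambda_k}^\Theta$, that $\langle k_{\lambda_n}^\Theta,k_{\lambda_k}^\Theta\rangle\to 0$ as $n\to\infty$. By the domination of the preceding paragraph, both conclusions persist with $k_{\lambda_n},k_{\lambda_k}$ in place of $k_{\lambda_n}^\Theta,k_{\lambda_k}^\Theta$.

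Now, given $\varepsilon>0$, choose $M\ge N_0$ with $\|G_M^\Theta-I\|^2<\varepsilon/2$; then $\sum_{k\ge M,\,k\ne n}|\langle k_{\lambda_n},k_{\lambda_k}\rangle|^2<\varepsilon/2$ for all $n\ge M$, and, using the second consequence, pick $N_1\ge M$ with $\sum_{k<M}|\langle k_{\lambda_n},k_{\lambda_k}\rangle|^2<\varepsilon/2$ for all $n\ge N_1$. Since $\prod_j(1-x_j)\ge 1-\sum_j x_j$ for $x_j\in[0,1]$, taking $x_k=|\langle k_{\lambda_n},k_{\lambda_k}\rangle|^2$ gives $\delta_n^2\ge 1-\sum_{k\ne n}|\langle k_{\lambda_n},k_{\lambda_k}\rangle|^2>1-\varepsilon$ for $n\ge N_1$; hence $\delta_n\to 1$. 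In particular $\rho(\lambda_n,\lambda_1)\ge\delta_n\to 1$, so $|\lambda_n|\to 1$; and for each fixed $n$, Bessel's inequality gives $\sum_{k\ne n}\bigl(1-\rho(\lambda_n,\lambda_k)\bigr)\le\sum_{k\ne n}|\langle k_{\lambda_n}^\Theta,k_{\lambda_k}^\Theta\rangle|^2<\infty$, so, the factors $\rho(\lambda_n,\lambda_k)$ being strictly positive (the $\lambda_k$ are distinct), each product $\delta_n$ converges to a strictly positive value. Therefore $\inf_n\delta_n>0$, so $\{\lambda_n\}$ is a Carleson interpolating sequence, and together with $\delta_n\to 1$ it is a thin interpolating sequence.

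The substantive step is the kernel identity of the first paragraph; everything afterward is bookkeeping. The point requiring care is that the $AOS$ hypothesis controls only tails, so the finitely many small indices must be handled by a separate argument --- this is exactly the role of the ``Bessel for a fixed vector'' consequence --- and, for the same reason, the positivity of $\delta_n$ for the initial indices must be checked by hand rather than read off from $\delta_n\to 1$. (One is tempted instead to argue that $\{k_{\lambda_n}\}$ is itself an $AOS$ and to invoke Volberg's Theorem~\ref{Volberg}; but entrywise domination of $|G_N-I|$ by $|G_N^\Theta-I|$ does not transfer operator--norm smallness, so $\{k_{\lambda_n}\}$ can be recognized as an $AOS$ only a posteriori, via Volberg, once thinness is known.)
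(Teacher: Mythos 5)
The paper does not prove this proposition---it is cited verbatim from \cite{CFT} (Proposition 5.1)---so there is no in-paper argument for me to compare yours against; I can only assess yours on its own terms, and it is correct. Your bridge identity
\[
\langle k_{\lambda_n}^\Theta,k_{\lambda_m}^\Theta\rangle
=\frac{1-\overline{\Theta(\lambda_n)}\,\Theta(\lambda_m)}{\sqrt{(1-|\Theta(\lambda_n)|^2)(1-|\Theta(\lambda_m)|^2)}}\,
\langle k_{\lambda_n},k_{\lambda_m}\rangle
\]
is verified directly from the formula for $K_{\lambda}^\Theta$, and the elementary computation $|1-\bar a b|^2-(1-|a|^2)(1-|b|^2)=|a-b|^2\ge 0$ does give the crucial domination $|\langle k_{\lambda_n},k_{\lambda_m}\rangle|\le|\langle k_{\lambda_n}^\Theta,k_{\lambda_m}^\Theta\rangle|$. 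From there, the tail estimate $\sum_{k\ge N,\,k\ne n}|\langle k_{\lambda_n}^\Theta,k_{\lambda_k}^\Theta\rangle|^2=\|(G_N^\Theta-I)e_n\|^2\le\|G_N^\Theta-I\|^2\to 0$, the Bessel bound for a fixed test vector to dispatch the finitely many head indices, and the Weierstrass inequality $\prod(1-x_j)\ge 1-\sum x_j$ (the same inequality the authors use in their proof of Theorem~\ref{thm:Carleson}) combine correctly to give $\delta_n\to 1$. Your separate verification that each individual $\delta_n$ is strictly positive---by noting that $\sum_{k}(1-\rho(\lambda_n,\lambda_k))\le\sum_k|\langle k_{\lambda_n}^\Theta,k_{\lambda_k}^\Theta\rangle|^2<\infty$, so each infinite product converges to a nonzero value---is necessary and you handle it properly; this also yields the Blaschke condition, which the hypotheses do not otherwise assume. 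Your closing remark is also well taken: entrywise domination of off-diagonal Gram entries does not control operator norms, which is exactly why you argue through Hilbert--Schmidt-type column bounds rather than attempting a direct $AOS$-to-$AOS$ transfer followed by Theorem~\ref{Volberg}. No gaps.
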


\begin{thm}[Theorem 5.2, \cite{CFT}]\label{theorem5.2CFT}
Suppose $\sup_{n \ge 1} |\Theta(\lambda_n)| < 1$. If $\{\lambda_n\}$ is a thin interpolating sequence, then either

(i) $\{k_{\lambda_n}^\Theta\}_{n\ge1}$ is an $AOB$ or

(ii) there exists $p \ge 2$ such that $\{k_{\lambda_n}^\Theta\}_{n \ge p}$ is a complete $AOB$ in $K_\Theta$.
\end{thm}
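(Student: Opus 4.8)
The plan is to reduce the statement to a question about the Gram matrix of $\{k_{\lambda_n}^\Theta\}$ and to settle it by comparison with the Gram matrix of $\{k_{\lambda_n}\}$ in $H^2$. Since $\{\lambda_n\}$ is a thin interpolating sequence, Theorem \ref{Volberg} tells us that $\{k_{\lambda_n}\}$ is a complete $AOB$ in $K_B$, so by Proposition \ref{propCFT} its Gram matrix $G=(\langle k_{\lambda_n},k_{\lambda_m}\rangle)_{n,m\ge1}$ is a bounded invertible operator of the form $G=I+K_0$ with $K_0$ compact. In particular $\{k_{\lambda_n}\}$ is a Riesz sequence in $H^2$, hence minimal and Bessel. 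The goal is to show that the Gram matrix $G^\Theta=(\langle k_{\lambda_n}^\Theta,k_{\lambda_m}^\Theta\rangle)_{n,m\ge1}$ also has the form $I+K^\Theta$ with $K^\Theta$ compact, and then to read off the conclusion from Proposition \ref{propCFT}.

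For the comparison I would start from the identity $K_{\lambda_n}=K_{\lambda_n}^\Theta+\overline{\Theta(\lambda_n)}\,\Theta K_{\lambda_n}$ recalled in the introduction, which after normalization becomes
$$k_{\lambda_n}^\Theta=\frac{1}{\sqrt{1-|\Theta(\lambda_n)|^2}}\Bigl(k_{\lambda_n}-\overline{\Theta(\lambda_n)}\,\Theta k_{\lambda_n}\Bigr).$$
Using that multiplication by $\Theta$ is isometric and that $\langle k_{\lambda_n},\Theta k_{\lambda_m}\rangle=\overline{\Theta(\lambda_n)}\langle k_{\lambda_n},k_{\lambda_m}\rangle$, a short computation gives
$$\langle k_{\lambda_n}^\Theta,k_{\lambda_m}^\Theta\rangle=\frac{1-\overline{\Theta(\lambda_n)}\Theta(\lambda_m)}{\sqrt{\bigl(1-|\Theta(\lambda_n)|^2\bigr)\bigl(1-|\Theta(\lambda_m)|^2\bigr)}}\,\langle k_{\lambda_n},k_{\lambda_m}\rangle.$$
Writing $D=\operatorname{diag}\bigl((1-|\Theta(\lambda_n)|^2)^{-1/2}\bigr)$ and $E=\operatorname{diag}\bigl(\Theta(\lambda_n)(1-|\Theta(\lambda_n)|^2)^{-1/2}\bigr)$, both bounded on $\ell^2$ precisely because $\sup_n|\Theta(\lambda_n)|<1$, this reads $G^\Theta=DGD-E^{*}GE$. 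The elementary identity $D^{2}-E^{*}E=I$ (each diagonal entry equals $(1-|\Theta(\lambda_n)|^2)/(1-|\Theta(\lambda_n)|^2)=1$) then yields $G^\Theta=(D^{2}-E^{*}E)+(DK_0D-E^{*}K_0E)=I+K^\Theta$, and $K^\Theta:=DK_0D-E^{*}K_0E$ is compact because $K_0$ is compact and $D,E$ are bounded. This is the only place where the hypothesis $\sup_n|\Theta(\lambda_n)|<1$ enters.

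Now $G^\Theta=I+K^\Theta$ is a positive operator with $K^\Theta$ compact and self-adjoint, so $\sigma(K^\Theta)\subseteq[-1,\infty)$ and $-1$, if it lies in the spectrum at all, is an eigenvalue of finite multiplicity. If $-1\notin\sigma(K^\Theta)$, then $G^\Theta$ is invertible and Proposition \ref{propCFT} gives alternative (i): $\{k_{\lambda_n}^\Theta\}_{n\ge1}$ is an $AOB$. If $-1\in\sigma(K^\Theta)$, I pass to a tail: the compression $P_{\ge p}K^\Theta P_{\ge p}$ of the compact operator $K^\Theta$ tends to $0$ in norm as $p\to\infty$, so for $p$ large enough $I+P_{\ge p}K^\Theta P_{\ge p}$, which is exactly the Gram matrix of $\{k_{\lambda_n}^\Theta\}_{n\ge p}$, is invertible, and Proposition \ref{propCFT} shows $\{k_{\lambda_n}^\Theta\}_{n\ge p}$ is an $AOB$.

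What remains, and what I expect to be the main obstacle, is to show that in this second case the tail $\{k_{\lambda_n}^\Theta\}_{n\ge p}$ is in fact complete in $K_\Theta$, equivalently $K_\Theta\cap B_pH^2=\{0\}$, where $B_p$ is the Blaschke product with zeros $\{\lambda_n:n\ge p\}$. The input is that $-1\in\sigma(K^\Theta)$ means there is a nonzero $v\in\ell^2$ with $\sum_n v_nk_{\lambda_n}^\Theta=0$; inserting the displayed formula for $k_{\lambda_n}^\Theta$ and using that $\{k_{\lambda_n}\}$ is a Riesz sequence in $H^2$, one obtains nonzero functions $f,g$ in $\overline{\operatorname{span}}\{k_{\lambda_n}\}=K_B$ with $f=\Theta g$ and with the coefficient sequences of $f$ and $g$ coupled through the values $\Theta(\lambda_n)$. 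The task is then to show that such a rigid relation forces $K_\Theta\cap BH^2=\{0\}$ and that the relation is carried by only finitely many of the $k_{\lambda_n}^\Theta$, so that deleting an initial segment preserves completeness. This is the step that uses the fine structure of model spaces, namely the characterization $\phi\in K_\Theta\iff\overline{\Theta}\phi\in\overline{zH^2}$ and the divisibility relations it produces, together with the conjugation on $K_B$, rather than soft operator theory, and that is where I would expect to concentrate the effort; the rest (convergence of the series, boundedness of $D$ and $E$, verification of the Gram-matrix formula, and the norm-convergence of the compressions) is routine.
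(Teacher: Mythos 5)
The paper does not prove this statement; it simply cites \cite{CFT}*{Theorem 5.2}, so there is no in-paper argument to compare against and I will assess your proposal on its own terms.

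Your reduction to the Gram matrix is carried out correctly. The normalized decomposition $k_{\lambda_n}^\Theta = (1-|\Theta(\lambda_n)|^2)^{-1/2}\bigl(k_{\lambda_n} - \overline{\Theta(\lambda_n)}\Theta k_{\lambda_n}\bigr)$, the resulting identity
$$\langle k_{\lambda_n}^\Theta, k_{\lambda_m}^\Theta\rangle = \frac{1-\overline{\Theta(\lambda_n)}\Theta(\lambda_m)}{\sqrt{(1-|\Theta(\lambda_n)|^2)(1-|\Theta(\lambda_m)|^2)}}\,\langle k_{\lambda_n}, k_{\lambda_m}\rangle,$$
the factorization $G^\Theta = DGD - E^*GE$, the diagonal identity $D^2 - E^*E = I$, and the conclusion $G^\Theta = I + K^\Theta$ with $K^\Theta = DK_0D - E^*K_0E$ compact all check out, and the hypothesis $\sup_n|\Theta(\lambda_n)|<1$ enters exactly where you say, through boundedness of $D$ and $E$. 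The dichotomy on whether $-1 \in \sigma(K^\Theta)$, the treatment of case (i), and the fact that a far enough tail has invertible Gram matrix and hence is an AOB by Proposition~\ref{propCFT}, are all sound.

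The genuine gap is the one you flag yourself: completeness of the tail in case (ii). Proposition~\ref{propCFT} gives you that $\{k_{\lambda_n}^\Theta\}_{n\geq p}$ is an asymptotically orthonormal Riesz \emph{sequence}, but says nothing about whether its closed span is all of $K_\Theta$ — the Gram matrix only records the inner products among the vectors, not their relation to the ambient space, so soft operator theory alone cannot close this. That completeness is exactly the substantive content of alternative (ii): the assertion is that a nontrivial relation $\sum v_n k_{\lambda_n}^\Theta = 0$ forces $\overline{\operatorname{span}}\{k_{\lambda_n}^\Theta : n\geq p\} = K_\Theta$ for some $p$. Your sketch — extracting nonzero $f=\Theta g$ with $f,g\in K_B$ from the relation — is a reasonable opening move, but you do not derive from it that $K_\Theta\cap B_p H^2=\{0\}$, nor do you establish that the finitely many deletions suffice. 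As written, the argument proves the weaker dichotomy "(i) holds, or some tail is an AOB" rather than the stated theorem, and the missing completeness step is where the actual work of \cite{CFT} lies.
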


\section{Hilbert Space Versions}
\label{HSV}

\subsection{Asymptotic and Eventual Interpolating Sequences} 

\label{asip}

Let $\mathcal{H}$ be a reproducing kernel Hilbert space of analytic functions over a domain $\Omega\subset\C^n$ with reproducing kernel $K_\lambda$ at the point $\lambda \in \Omega$.  We define two properties that a sequence $\{\lambda_n\}\subset \Omega$ can have.

\begin{defin}
A sequence $\{\lambda_n\}\subset\Omega$ is an \textnormal{eventual $1$-interpolating sequence for $\mathcal{H}$}, denoted $EIS_{\mathcal{H}}$,  if for every $\varepsilon > 0$ there exists $N$ such that for each $\{a_n\} \in \ell^2$ there exists $f_{N, a} \in \mathcal{H}$ with
$$f_{N, a}(\lambda_n) \norm{K_{\lambda_n}}_{\mathcal{H}}^{-1}=f_{N, a}(\lambda_n) K_{\lambda_n}(\lambda_n)^{-\frac{1}{2}} = a_n ~\mbox{for}~ n \ge N ~\mbox{and}~ \|f_{N, a}\|_{\mathcal{H}} \le (1 + \varepsilon) \|a\|_{N, \ell^2}.$$
\end{defin} 

\begin{defin} A sequence $\{\lambda_n\}\subset\Omega$ is a \textnormal{strong asymptotic interpolating sequence for $\mathcal{H}$}, denoted $AIS_{\mathcal{H}}$, if for all $\varepsilon > 0$ there exists $N$ such that for all sequences $\{a_n\} \in \ell^2$ there exists a function $G_{N, a} \in \mathcal{H}$ such that $\|G_{N, a}\|_\mathcal{H} \le \|a\|_{N,\ell^2}$ and 
$$\|\{G_{N, a}(\lambda_n) K_{\lambda_n}(\lambda_n)^{-\frac{1}{2}}  - a_n\}\|_{N, \ell^2} < \varepsilon \|a\|_{N, \ell^2}.$$ \end{defin}

We now wish to prove Theorem~\ref{EISiffASI} below. The proof, which is a modification of the proof of the open-mapping theorem, also yields a proof of the following proposition.

\begin{prop}\label{Banachspace}

Let $X$ and $Y$ be Banach spaces and let $T: X \to Y$ be a bounded operator and $\varepsilon > 0$. If
$$\sup_{\|y\| = 1} \inf_{\|x\| \le 1} \|Tx - y\| < \varepsilon < 1,$$ then for all $y \in Y$, 
there exists $x \in X$ such that $\|x\| \le \frac{1}{1 - \varepsilon} \|y\|$ and $Tx = y$.
\end{prop}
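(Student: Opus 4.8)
The plan is to mimic the standard proof of the open mapping theorem, where the hypothesis plays the role of ``surjectivity up to a factor'' on the unit ball. The key observation is that the assumption $\sup_{\|y\|=1}\inf_{\|x\|\le 1}\|Tx-y\| < \varepsilon$ says precisely that for every $y\in Y$ there is an $x\in X$ with $\|x\|\le \|y\|$ and $\|Tx - y\| \le \varepsilon\|y\|$; by homogeneity it suffices to check this for $\|y\|=1$, which is immediate.

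First I would fix $y\in Y$ with $\|y\|=1$ (the general case follows by scaling). Build a sequence $\{x_k\}_{k\ge 0}$ in $X$ by induction: set $y_0 = y$, and given $y_k$ with $\|y_k\|\le \varepsilon^k$, use the hypothesis to choose $x_k$ with $\|x_k\|\le \|y_k\|\le \varepsilon^k$ and $\|Tx_k - y_k\| < \varepsilon\|y_k\|\le \varepsilon^{k+1}$; then put $y_{k+1} = y_k - Tx_k$, so that $\|y_{k+1}\| \le \varepsilon^{k+1}$, continuing the induction. Since $\|x_k\|\le \varepsilon^k$ and $\varepsilon < 1$, the series $\sum_{k\ge 0} x_k$ converges absolutely in the Banach space $X$ to some $x$ with $\|x\| \le \sum_{k\ge 0}\varepsilon^k = \frac{1}{1-\varepsilon} = \frac{1}{1-\varepsilon}\|y\|$. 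Because $T$ is bounded (hence continuous), $Tx = \sum_{k\ge 0} Tx_k = \sum_{k\ge 0} (y_k - y_{k+1})$, which telescopes: the partial sums equal $y_0 - y_{k+1} = y - y_{k+1}$, and $\|y_{k+1}\|\le \varepsilon^{k+1}\to 0$, so $Tx = y$. Finally, for arbitrary nonzero $y$, apply the result to $y/\|y\|$ and rescale; the case $y=0$ is trivial with $x=0$.

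I do not expect a serious obstacle here; the only point requiring a little care is verifying that the strict inequality in the hypothesis indeed yields, for each $y$, a genuine choice of $x$ with $\|x\|\le\|y\|$ (not $\|x\|\le(1+\delta)\|y\|$) — this is exactly why the $\inf$ is taken over $\|x\|\le 1$ rather than $\|x\|<1$, and why scaling by $\|y\|$ is legitimate. One should also note that if $\|y_k\| = 0$ for some $k$ the induction terminates and we may take $x_j = 0$ for $j\ge k$, so the argument goes through without division-by-zero issues. Completeness of $X$ is what makes the absolutely convergent series $\sum x_k$ converge, and continuity of $T$ is what lets us pass $T$ through the infinite sum; both hypotheses are used exactly once.
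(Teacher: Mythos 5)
Your proof is correct and is exactly the iterative ``open-mapping'' scheme the paper alludes to (the paper does not give a separate proof of this proposition; it notes that the iterative argument used for Theorem~\ref{EISiffASI}, with contraction factor $\varepsilon/(1+\varepsilon)$ in place of your $\varepsilon$, also yields it). Your handling of the strict inequality and of the degenerate case $y_k=0$ is careful and matches what is needed.
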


Theorem~\ref{EISiffASI} follows from Proposition~\ref{Banachspace}, but doing so requires dealing with several technicalities that obfuscate the underlying ideas, and so we present a direct proof of our desired implication. When we turn to Banach algebras, the corresponding implication (in Theorem~\ref{main_algebra}) will be a direct consequence of Proposition~\ref{Banachspace}. We thank the referee for pointing out Proposition~\ref{Banachspace} to us.

\begin{thm}\label{EISiffASI} 
Let $\mathcal{H}$ be a reproducing kernel space of analytic functions over the domain $\Omega\subset\mathbb{C}^n$ with reproducing kernel at the point $\lambda$ given by $K_\lambda$.  Then $\{\lambda_n\}$ is an $EIS_{\mathcal{H}}$ sequence if and only if $\{\lambda_n\}$ is an $AIS_{\mathcal{H}}$.
\end{thm}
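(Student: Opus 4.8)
The plan is to prove the two implications separately. The easy direction is $EIS_{\mathcal{H}} \Rightarrow AIS_{\mathcal{H}}$: given $\varepsilon > 0$, pick $N$ as in the $EIS$ definition so that for every $a \in \ell^2$ there is $f_{N,a}$ with $f_{N,a}(\lambda_n) K_{\lambda_n}(\lambda_n)^{-1/2} = a_n$ for $n \ge N$ and $\|f_{N,a}\|_{\mathcal H} \le (1+\varepsilon)\|a\|_{N,\ell^2}$. Then $G_{N,a} := (1+\varepsilon)^{-1} f_{N,a}$ has norm at most $\|a\|_{N,\ell^2}$, and $G_{N,a}(\lambda_n) K_{\lambda_n}(\lambda_n)^{-1/2} = (1+\varepsilon)^{-1} a_n$, so the interpolation error is $\|\{(1+\varepsilon)^{-1} a_n - a_n\}\|_{N,\ell^2} = \frac{\varepsilon}{1+\varepsilon}\|a\|_{N,\ell^2} < \varepsilon \|a\|_{N,\ell^2}$. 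This handles one direction with room to spare; one only needs to be mildly careful to rename $\varepsilon$ at the start so the final bound comes out with the exact constant demanded.

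The substantive direction is $AIS_{\mathcal{H}} \Rightarrow EIS_{\mathcal{H}}$, and here the idea is the iteration at the heart of the open-mapping theorem (equivalently, Proposition~\ref{Banachspace} applied to the right restriction operator). Fix $\varepsilon > 0$ and choose a smaller parameter $\eta$ with $\frac{1}{1-\eta}\le 1+\varepsilon$; then pick $N$ from the $AIS_{\mathcal H}$ definition for that $\eta$. Think of the bounded ``sampling'' operator $T_N \colon \mathcal{H} \to \ell^2$ given by $T_N f = \{f(\lambda_n) K_{\lambda_n}(\lambda_n)^{-1/2}\}_{n \ge N}$; the $AIS$ hypothesis says exactly that $T_N$ maps the closed unit ball of $\mathcal{H}$ to an $\eta$-dense subset of the closed unit ball of $\ell^2$ (after the harmless identification of $\ell^2$ with the tail sequences). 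Given a target $a$ with $\|a\|_{N,\ell^2}=1$, produce $G^{(1)}$ of norm $\le 1$ with $\|a - T_N G^{(1)}\|_{N,\ell^2} < \eta$; apply the hypothesis again to the residual $a - T_N G^{(1)}$ (scaled) to get $G^{(2)}$ of norm $\le \eta$ with residual $< \eta^2$; continue, and set $f_{N,a} = \sum_{k\ge 1} G^{(k)}$. The series converges in $\mathcal H$ since $\sum_k \|G^{(k)}\|_{\mathcal H} \le \sum_{k \ge 0}\eta^k = \frac{1}{1-\eta}$, and by continuity of point evaluations (the $K_{\lambda_n}$ are bounded functionals) $T_N f_{N,a} = a$. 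Thus $f_{N,a}(\lambda_n)K_{\lambda_n}(\lambda_n)^{-1/2} = a_n$ for $n \ge N$ with $\|f_{N,a}\|_{\mathcal H} \le \frac{1}{1-\eta}\|a\|_{N,\ell^2} \le (1+\varepsilon)\|a\|_{N,\ell^2}$, which is precisely $EIS_{\mathcal H}$. The general $a$ follows by homogeneity.

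The main obstacle is not conceptual but bookkeeping: one must confirm that a \emph{single} $N$ works simultaneously at every stage of the iteration. This is where it matters that the $AIS$ estimate is the \emph{relative} bound $\|T_N G - a\|_{N,\ell^2} < \eta \|a\|_{N,\ell^2}$ rather than an absolute one — it is scale-invariant, so after normalizing each residual we may reapply the same $N$ to the rescaled residual and get geometric decay, with the norms of the correction terms controlled by the same geometric ratio. The only remaining subtlety is the passage from the $AIS$ bound, which a priori asserts existence of $G_{N,a}$ for each fixed $a$, to the ``uniform $\eta$-density of the image of the ball'' statement the iteration uses; but this is immediate since the $AIS$ definition already quantifies over all $a \in \ell^2$. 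Continuity of the evaluation functionals $f \mapsto f(\lambda_n)$ on $\mathcal H$ — automatic in a reproducing kernel Hilbert space — is what legitimizes passing the limit through $T_N$, and should be noted explicitly. Because of the technicalities the authors flag (matching the tail index sets, the normalization constants), a direct argument along these lines is cleaner than formally invoking Proposition~\ref{Banachspace}.
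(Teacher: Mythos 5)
Your proposal is correct and follows essentially the same route as the paper's proof: the trivial rescaling for $EIS_{\mathcal H}\Rightarrow AIS_{\mathcal H}$, and for the converse the open-mapping iteration with geometric decay (your residuals and $\frac{1}{1-\eta}\|a\|_{N,\ell^2}$ bound are exactly the paper's $a^{(k)}$ and $(1+\varepsilon)\|a\|_{N,\ell^2}$, with the paper taking $\eta=\frac{\varepsilon}{1+\varepsilon}$). One small caution: you refer to $T_N$ as a \emph{bounded} operator $\mathcal{H}\to\ell^2$, which the hypotheses do not by themselves supply; but your argument does not actually need this, since the pointwise continuity of each evaluation $f\mapsto f(\lambda_n)$ together with $\|a-T_N\sum_{k\le m}G^{(k)}\|_{\ell^2}\to 0$ (hence coordinatewise convergence) already gives $T_Nf_{N,a}=a$, which is precisely how the paper closes the argument.
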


\begin{proof} 
If a sequence is an $EIS_{\mathcal{H}}$, then it is trivially $AIS_{\mathcal{H}}$, for given $\varepsilon > 0$ we may take $G_{N, a} = \frac{f_{N, a}}{(1 + \varepsilon)}$. 

For the other direction, suppose $\{\lambda_n\}$ is an $AIS_{\mathcal{H}}$ sequence. Let $\varepsilon > 0$, $N := N(\varepsilon)$, and  $\{a_j\}:=\{a_{j}^{(0)}\}$ be any sequence. First choose $f_0 \in \mathcal{H}$ so that for $n \ge N$ we have
$$\|\{K_{\lambda_n}(\lambda_n)^{-\frac{1}{2}} f_0(\lambda_n) - a_{n}^{(0)}\}\|_{N, \ell^2} < \frac{\varepsilon}{1+\varepsilon} \|a\|_{N, \ell^2}$$ and $$\|f_0\|_{\mathcal{H}} \le \|a\|_{N,\ell^2}.$$
Now let $a_{n}^{(1)} = a_{n}^{(0)} - K_{\lambda_n}(\lambda_n)^{-\frac{1}{2}} f_0(\lambda_n)$. Note that $\|a^{(1)}\|_{N, \ell^2} < \frac{\varepsilon}{1+\varepsilon} \|a\|_{N, \ell^2}$. Since we have an $AIS_{\mathcal{H}}$ sequence, we may choose $f_1$ such that for $n \ge N$ we have
$$\|\{f_1(\lambda_n)K_{\lambda_n}(\lambda_n)^{-\frac{1}{2}}  - a_{n}^{(1)}\}\|_{N, \ell^2} < \frac{\varepsilon}{1+\varepsilon} \|a^{(1)}\|_{N, \ell^2} <  \left(\frac{\varepsilon}{1+\varepsilon}\right)^2\|a\|_{N, \ell^2},$$
and $$\|f_1\|_{\mathcal{H}} \le \|a^{(1)}\|_{N, \ell^2}<\left(\frac{\varepsilon}{1+\varepsilon}\right)\|a\|_{N,\ell^2}.$$ In general, we let
$$a_{j}^{(k)} = -f_{k - 1}(\lambda_j)K_{\lambda_j}(\lambda_j)^{-\frac{1}{2}} + a_{j}^{(k-1)}$$ so that
$$\|a^{(k)}\|_{N, \ell^2} \le \frac{\varepsilon}{1+\varepsilon} \|a^{(k - 1)}\|_{N, \ell^2} \le \left(\frac{\varepsilon}{1+\varepsilon}\right)^2 \|a^{(k-2)}\|_{N, \ell^2} \le \cdots \le \left(\frac{\varepsilon}{1+\varepsilon}\right)^k \|a\|_{N, \ell^2}$$ and 
$$\|f_k\|_{\mathcal{H}} \le \|a^{(k)}\|_{N, \ell^2}<\left(\frac{\varepsilon}{1+\varepsilon}\right)^k\|a\|_{N,\ell^2}.$$ 
Then consider $f(z) = \sum_{k = 0}^\infty f_k(z)$.
Since $f_k(\lambda_j) = \left(a_{j}^{(k)} - a_{j}^{(k+1)}\right)K_{\lambda_j}(\lambda_j)^{\frac{1}{2}}$ and $a_{j}^{(k)} \to 0$ as $k \to \infty$, we have for each $j \ge N$, $$f(\lambda_j) = a_{j}^{(0)} K_{\lambda_j}(\lambda_j)^{\frac{1}{2}} = a_jK_{\lambda_j}(\lambda_j)^{\frac{1}{2}}.$$ Further $\|f\|_{\mathcal{H}}\le \sum_{k = 0}^\infty \left(\frac{\varepsilon}{1+\varepsilon}\right)^{k} \|a\|_{N, \ell^2} = \frac{1}{1 - \frac{\varepsilon}{1+\varepsilon}} \|a\|_{N, \ell^2}=(1+\varepsilon)\|a\|_{N, \ell^2}$.  This proves that $\{\lambda_n\}$ is an $EIS_{\mathcal{H}}$ sequence.
\end{proof}

\subsection{The Hardy and Model Spaces}

We let $\Theta$ denote a nonconstant inner function and apply Theorem~\ref{EISiffASI} to the reproducing kernel Hilbert space $K_{\Theta}$. We also include statements and results about Carleson measures.  Given a non-negative measure $\mu$ on $\D$, let us denote the (possibly infinite) constant
$$
    \CC(\mu) =  \sup_{f \in H^2, f \neq 0} \frac{\|f\|^2_{L^2(\D, \mu)}}{\|f\|^2_2}
$$
as the Carleson embedding constant of $\mu$ on $H^2$ and
$$
    \RR(\mu) =  \sup_{z\in\mathbb{D}} \frac{\|k_z\|_{L^2(\D, \mu)}}{\|k_z\|_2}=\sup_{z} \|k_z\|_{L^2(\D, \mu)}
$$
as the embedding constant of $\mu$ on $k_z$, the normalized reproducing kernel of $H^2$.  It is well-known that $\CC(\mu)\approx \RR(\mu)$, \cites{MR2417425,nikolski}.

\begin{thm}\label{main} Let $\{\lambda_n\}$ be an interpolating sequence  in $\mathbb{D}$ and let $\Theta$ be an inner function.  Suppose that $\kappa:=\sup_{n} \left\vert \Theta(\lambda_n)\right| < 1$.  The following are equivalent:
\begin{enumerate} 

\item $\{\lambda_n\}$ is an $EIS_{H^2}$ sequence\label{eish2};
\item $\{\lambda_n\}$ is a thin interpolating sequence\label{thin1};
\item \label{aob} Either
\begin{enumerate}
\item $\{k_{\lambda_n}^\Theta\}_{n\ge1}$ is an $AOB$, or
\item there exists $p \ge 2$ such that $\{k_{\lambda_n}^\Theta\}_{n \ge p}$ is a complete $AOB$ in $K_\Theta$;
\end{enumerate}
\item $\{\lambda_n\}$ is an $AIS_{H^2}$ sequence\label{Aish2};
\item The measure $$\mu_N = \sum_{k \ge N} (1 - |\lambda_k|^2)\delta_{\lambda_k}$$ is a Carleson measure for $H^2$ with 
Carleson embedding constant $\CC(\mu_N)$ satisfying $\CC(\mu_N) \to 1$ as $N \to \infty$\label{C1};
\item The measure $$\nu_N = \sum_{k \ge N}\frac{(1 - |\lambda_k|^2)}{\delta_k} \delta_{\lambda_k}$$ is a Carleson measure for $H^2$ with embedding constant $\RR_{\nu_N}$ 
on reproducing kernels satisfying $\RR_{\nu_N} \to 1$\label{C2}.

\bigskip
\noindent
Further, \eqref{eis} and \eqref{ais} are equivalent to each other and imply each of the statements above. If, in addition, $ \Theta(\lambda_n) \to0$, then  \eqref{eish2} - \eqref{ais} are equivalent.
\bigskip

\item $\{\lambda_n\}$ is an $EIS_{K_\Theta}$ sequence\label{eis};
\item $\{\lambda_n\}$ is an $AIS_{K_\Theta}$ sequence\label{ais}.
\end{enumerate}
\end{thm}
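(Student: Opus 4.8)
The plan is to prove Theorem~\ref{main} by establishing a cycle of implications among \eqref{eish2}--\eqref{C2}, then treating \eqref{eis}$\Leftrightarrow$\eqref{ais} separately (already done in Theorem~\ref{EISiffASI}), and finally adding the extra hypothesis $\Theta(\lambda_n)\to 0$ to close the loop between the $H^2$ statements and the $K_\Theta$ statements. The backbone is the equivalence \eqref{thin1}$\Leftrightarrow$\eqref{aob}, which is essentially Proposition~\ref{prop5.1CFT} together with Theorem~\ref{theorem5.2CFT} (using $\kappa<1$); the equivalence \eqref{eish2}$\Leftrightarrow$\eqref{Aish2} is the special case $\Theta = B$ (or any inner function, applied to $H^2$ directly) of Theorem~\ref{EISiffASI}; and the equivalence of these with \eqref{thin1} for $H^2$ is the Shapiro--Shields characterization specialized to the thin case, namely that $\{\lambda_n\}$ is thin iff the normalized kernels $\{k_{\lambda_n}\}$ eventually form an asymptotically orthonormal system, which is exactly the eventual $(1+\varepsilon)$-interpolation estimate for $H^2$.

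First I would record the standard dictionary: for a Blaschke sequence, $\{\lambda_n\}$ is $EIS_{H^2}$ iff the kernels $\{k_{\lambda_n}\}_{n\ge N}$ form an AOB with constants tending to $1$, which by Proposition~\ref{propCFT} happens iff the associated Gram matrices $G_N = (\langle k_{\lambda_m}, k_{\lambda_n}\rangle)_{m,n\ge N}$ are of the form $I + K_N$ with $\|K_N\|\to 0$; an explicit computation gives $\langle k_{\lambda_m}, k_{\lambda_n}\rangle = \tfrac{(1-|\lambda_m|^2)^{1/2}(1-|\lambda_n|^2)^{1/2}}{1-\overline{\lambda_m}\lambda_n}$, whose off-diagonal decay is controlled by the pseudohyperbolic distances, so $\|K_N\|\to 0$ is equivalent to $\delta_n\to 1$, i.e. thinness. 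This simultaneously yields \eqref{eish2}$\Leftrightarrow$\eqref{thin1} and, via the open-mapping argument of Theorem~\ref{EISiffASI}, \eqref{Aish2}$\Leftrightarrow$\eqref{eish2}. For \eqref{aob}, thinness implies (via Proposition~\ref{prop5.1CFT} in reverse and Theorem~\ref{theorem5.2CFT}) that the model-space kernels $\{k_{\lambda_n}^\Theta\}$ are eventually a complete AOB in $K_\Theta$ (possibly after discarding finitely many terms), and conversely an AOS of $\{k_{\lambda_n}^\Theta\}$ forces thinness; the hypothesis $\kappa<1$ is what makes $k_{\lambda_n}^\Theta$ comparable to $k_{\lambda_n}$ and keeps the passage between the two uniform.

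Next I would handle the Carleson-measure reformulations \eqref{C1} and \eqref{C2}. For \eqref{C1}: the embedding constant $\CC(\mu_N)$ is, by the reproducing-kernel test $\CC(\mu_N)\approx\RR(\mu_N)$, comparable to $\sup_z \sum_{k\ge N}(1-|\lambda_k|^2)|k_z(\lambda_k)|^2$, and for an interpolating sequence this supremum tends to $1$ precisely when the sequence is thin; more directly, $\CC(\mu_N)\ge 1$ always (test on $k_{\lambda_k}$ for a single $k\ge N$ and let $\lambda_k$ approach the boundary, or note $\|k_{\lambda_k}\|^2_{L^2(\mu_N)}\ge (1-|\lambda_k|^2)|k_{\lambda_k}(\lambda_k)|^2 = 1$), and $\CC(\mu_N)\le 1+\varepsilon$ eventually is equivalent to the AOB estimate for $\{k_{\lambda_n}\}_{n\ge N}$ via duality between the embedding $H^2\hookrightarrow L^2(\mu_N)$ and the synthesis operator $\{a_n\}\mapsto\sum a_n k_{\lambda_n}$. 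Statement \eqref{C2} is a weighted variant: dividing by $\delta_k$ inflates the weights, and $\RR_{\nu_N}\to 1$ is again equivalent to thinness because $\delta_k\to 1$ makes $\nu_N$ and $\mu_N$ asymptotically equal while for a merely interpolating (non-thin) sequence the ratio stays bounded away from $1$; I would prove \eqref{C1}$\Leftrightarrow$\eqref{C2}$\Leftrightarrow$\eqref{thin1} by estimating $\sup_z\sum_{k\ge N}\tfrac{1-|\lambda_k|^2}{\delta_k}|k_z(\lambda_k)|^2$ from above and below in terms of $\inf_{k\ge N}\delta_k$.

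Finally, the last assertion: Theorem~\ref{EISiffASI} already gives \eqref{eis}$\Leftrightarrow$\eqref{ais}, and the implication \eqref{eis}$\Rightarrow$\eqref{eish2} (equivalently any of \eqref{eish2}--\eqref{C2}) follows because $EIS_{K_\Theta}$ forces $\{k_{\lambda_n}^\Theta\}$ to be eventually an AOB, hence by Proposition~\ref{prop5.1CFT} the sequence is thin. The one-directional gap is \eqref{thin1}$\Rightarrow$\eqref{eis}, i.e. upgrading ``$\{k_{\lambda_n}^\Theta\}$ is eventually an AOB'' (which thinness gives via Theorem~\ref{theorem5.2CFT}) to the \emph{sharp} $(1+\varepsilon)$-norm control required by $EIS_{K_\Theta}$; this is where the hypothesis $\Theta(\lambda_n)\to 0$ enters decisively. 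The idea is that when $|\Theta(\lambda_n)|\to 0$ the normalized kernels satisfy $\|k_{\lambda_n}^\Theta - k_{\lambda_n}\|\to 0$ (since $k_{\lambda_n}^\Theta = k_{\lambda_n} - \overline{\Theta(\lambda_n)}\,\Theta\,\widetilde{k}_{\lambda_n}$ up to normalization, and $\tfrac{1-|\lambda_n|^2}{1-|\Theta(\lambda_n)|^2}\to 1$), so the Gram matrix of $\{k_{\lambda_n}^\Theta\}_{n\ge N}$ is $I + (K_N + E_N)$ with both $\|K_N\|\to 0$ (from thinness) and $\|E_N\|\to 0$ (from $\Theta(\lambda_n)\to 0$); by Proposition~\ref{propCFT} this makes $\{k_{\lambda_n}^\Theta\}_{n\ge N}$ an AOB with constants converging to $1$, which is exactly $EIS_{K_\Theta}$ — and by Theorem~\ref{EISiffASI}, $AIS_{K_\Theta}$. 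The main obstacle is precisely this quantitative step: ensuring that the compact perturbation coming from the $\Theta$-correction term has operator norm going to $0$ uniformly over the tail, and that this is genuinely where $\Theta(\lambda_n)\to0$ (rather than merely $\kappa<1$) is needed; I expect one must be careful that the correction term $\sum_{k\ge N}\overline{\Theta(\lambda_k)}\,\Theta\,\widetilde{k}_{\lambda_k}$ is controlled not just pointwise but as an operator on $\ell^2$, which uses interpolation (Carleson measure bounds) for the $\widetilde{k}_{\lambda_k}$ together with $\sup_{k\ge N}|\Theta(\lambda_k)|\to 0$.
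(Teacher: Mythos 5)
Your overall skeleton matches the paper for the cited equivalences among \eqref{eish2}--\eqref{C2} (via \cite{GPW}, Proposition~\ref{prop5.1CFT}, and Theorem~\ref{theorem5.2CFT}), and you correctly invoke Theorem~\ref{EISiffASI} for \eqref{eis}$\Leftrightarrow$\eqref{ais}. However, the two pieces you fill in by hand are the ones where the paper does real work, and both differ from the paper's route. For \eqref{eis}$\Rightarrow$\eqref{eish2}, you argue that $EIS_{K_\Theta}$ ``forces $\{k_{\lambda_n}^\Theta\}$ to be eventually an AOB, hence by Proposition~\ref{prop5.1CFT} the sequence is thin.'' This has a genuine gap: the $EIS_{K_\Theta}$ property controls the minimal-norm interpolation, which is governed solely by the \emph{lower} Riesz bound $c_N$ of the tail Gram matrix (one gets $c_N \geq (1+\varepsilon)^{-2}$). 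Proposition~\ref{prop5.1CFT} needs an $AOS$, i.e.\ \emph{both} $c_N\to 1$ and $C_N\to 1$; the upper bound $C_N\to 1$ is a Carleson/Bessel assertion that does not follow from the interpolation hypothesis alone, and you do not supply an argument for it. The paper avoids this entirely by a short direct argument: given $a\in\ell^2$, rescale to $\tilde a_n = a_n(1-|\Theta(\lambda_n)|^2)^{-1/2}$, interpolate $\tilde a$ in $K_\Theta\subset H^2$ via $EIS_{K_\Theta}$, and cancel the factor $(1-|\Theta(\lambda_n)|^2)^{\pm 1/2}$ against the ratio $\|K_{\lambda_n}\|/\|K_{\lambda_n}^\Theta\|$; the hypothesis $\Theta(\lambda_n)\to 0$ makes the resulting constant $\frac{1+\varepsilon'}{(1-\kappa_{N_1}^2)^{1/2}}$ approach $1+\varepsilon$.

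For the converse \eqref{eish2}$\Rightarrow$\eqref{eis}, your Gram-matrix perturbation plan (writing the tail Gram matrix of $\{k_{\lambda_n}^\Theta\}$ as $I+K_N+E_N$ and estimating $\|E_N\|$ via a Carleson bound and $\sup_{n\geq N}|\Theta(\lambda_n)|\to 0$) is a genuinely different route from the paper, which instead runs a Newton-type iteration: start with $f_0\in H^2$ from $EIS_{H^2}$, project onto $K_\Theta$ using $P_\Theta = I - \Theta T_{\overline\Theta}$, and iteratively re-interpolate the resulting error, summing a geometric series. Your route is plausible and arguably cleaner if you can carry out the operator-norm estimate you flag as the ``main obstacle,'' but you should also notice a tension in your own plan: you observe that $AOB$ with constants tending to $1$ already yields $EIS_{K_\Theta}$ via minimal-norm interpolation, and Theorem~\ref{theorem5.2CFT} delivers precisely such an $AOB$ under only $\kappa<1$. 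So either that observation makes your extra Gram-matrix perturbation redundant, or you need to explain what additional quantitative control $\Theta(\lambda_n)\to 0$ is actually buying in this direction. As written, the proposal's \eqref{eis}$\Rightarrow$\eqref{eish2} direction is incomplete and its \eqref{eish2}$\Rightarrow$\eqref{eis} direction is not reconciled with the cited machinery.
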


\begin{proof}
The equivalence between \eqref{eis} and \eqref{ais} is contained in Theorem \ref{EISiffASI}. Similarly, this applies to \eqref{eish2} and \eqref{Aish2}.  In \cite{GPW}*{Theorem 4.5}, the authors prove that \eqref{thin1}, \eqref{C1} and \eqref{C2} are equivalent.  The equivalence between  \eqref{eish2},  \eqref{thin1}, and \eqref{Aish2} is contained in \cite{GPW}.  That \eqref{thin1} implies \eqref{aob} is Theorem~\ref{theorem5.2CFT}. That \eqref{aob} implies \eqref{thin1} also follows from results in \cite{CFT}, for if a sequence is an $AOB$ for some $p \ge 2$ it is an $AOS$ for $p \ge 2$ and hence thin by Proposition~\ref{prop5.1CFT} for $p \ge 2$. This is, of course, the same as being thin interpolating.   Thus, we have the equivalence of equations \eqref{eish2}, \eqref{thin1}, \eqref{aob}, \eqref{Aish2}, \eqref{C1}, and \eqref{C2}, as well as the equivalence of \eqref{eis} and \eqref{ais}. \\

Now we show that \eqref{eis} and \eqref{eish2} are equivalent under the hypothesis that $\Theta(\lambda_n)\to 0$.\\

\eqref{eis}$\Rightarrow$\eqref{eish2}.  Suppose that $\{\lambda_n\}$ is an $EIS_{K_\Theta}$ sequence.  We will prove that this implies it is an $EIS_{H^2}$ sequence, establishing \eqref{eish2}.  \\

Let $\varepsilon>0$ be given.  Choose $\varepsilon^\prime < \varepsilon$ and let $N_1 = N(\varepsilon^\prime)$ be chosen  according to the definition of $\{\lambda_n\}$ being an $EIS_{K_\Theta}$ sequence. Recall that 
$$\kappa_m = \sup_{n \ge m} |\Theta(\lambda_n)| \to 0,$$ so we may assume that we have
chosen $N_1$ so large that 
$$\frac{1 + \varepsilon^\prime}{(1 - \kappa_{N_1}^2)^{1/2}} < 1 + \varepsilon.$$
Define $\{\tilde{a}_n\}$ to be $0$ if $n < N_1$ and $\tilde{a}_n=a_n \left(1-\abs{\Theta(\lambda_n)}^2\right)^{-\frac{1}{2}}$ for $n \ge N_1$. Then $\{\tilde{a}_n\} \in \ell^2$.  
Select $f_a\in K_\Theta\subset H^2$ so that 

$$f_a(\lambda_n) \left(\frac{1-\abs{\Theta(\lambda_n)}^2}{1-\abs{\lambda_n}^2}\right)^{-\frac{1}{2}} = \tilde{a}_n = 
      a_n \left(1-\abs{\Theta(\lambda_n)}^2\right)^{-\frac{1}{2}} \, \textrm{ if } n \ge N_1$$ 
and $$\|f_a\| \le (1 + \varepsilon^\prime) \|\tilde{a}\|_{N_1, \ell^2} \le \frac{(1 + \varepsilon^\prime)}{(1 - \kappa_{N_1}^2)^{1/2}}\|a\|_{N_1, \ell^2} < (1 + \varepsilon) \|a\|_{N_1, \ell^2}.$$
Since $f_a\in K_\Theta$, we have that $f_a\in H^2$, and canceling out the common factor yields that $f_a(\lambda_n)(1-\abs{\lambda_n}^2)^{-\frac{1}{2}}=a_n$ for all $n\geq N_1$.  Thus $\{\lambda_n\}$ is an $EIS_{H^2}$ sequence as claimed.\\

\eqref{eish2}$\Rightarrow$\eqref{eis}. Suppose that $\Theta(\lambda_n) \to 0$ and $\{\lambda_n\}$ is an $EIS_{H^2}$ sequence; equivalently, that $\{\lambda_n\}$ is thin.  We want to show that the sequence $\{\lambda_n\}$ is an $EIS_{K_\Theta}$ sequence.  First we present some observations. \\

First, looking at the definition, we see that we may assume that $\varepsilon > 0$ is small, for any choice of $N$ that works for small $\varepsilon$ also works for larger values.\\

Second, if $f\in H^2$ and we let $\tilde{f}=P_{K_\Theta}f$, then we have that $\norm{\tilde{f}}_2\leq \norm{f}_2$ since $P_{K_\Theta}$ is an orthogonal projection.  Next, we have $P_{K_\Theta} = P_+ - \Theta P_+ \overline{\Theta}$, where $P_+$ is the orthogonal projection of $L^2$ onto $H^2$, so letting $T_{\overline{\Theta}}$ denote the Toeplitz operator with symbol $\overline{\Theta}$ we have
\begin{equation}\label{Toeplitz}
\tilde{f}(z)=f(z)-\Theta(z)T_{\overline{\Theta}}(f)(z).
\end{equation}
In what follows, $\kappa_m := \sup_{n \ge m}|\Theta(\lambda_n)|$ and recall that we assume that $\kappa_m \to 0$. \\

Since $\{\lambda_n\}$ is an $EIS_{H^2}$ sequence, there exists $N_1$ such that for any $a\in\ell^2$ there exists a function $f_0\in H^2$ such that $$f_0(\lambda_n)=a_n\left(\frac{1 - |\Theta(\lambda_n)|^2}{1-\abs{\lambda_n}^2}\right)^\frac{1}{2}~\mbox{for all}~n\geq N_1$$ and $$\norm{f_0}_{2}\leq (1+\varepsilon)\norm{\{a_k (1 - |\Theta(\lambda_k)|^2)^{\frac{1}{2}}\}}_{N_1,\ell^2} \le (1 + \varepsilon)\norm{a }_{N_1,\ell^2}.$$  Here we have applied the $EIS_{H^2}$ property to the sequence $\{a_k(1-\left\vert \Theta(\lambda_k)\right\vert^2)^{\frac{1}{2}}\}\in\ell^2$.  By \eqref{Toeplitz} we have that 
\begin{eqnarray*}
\tilde{f}_0(\lambda_k) & = & f_0(\lambda_k)-\Theta(\lambda_k) T_{\overline{\Theta}}(f_0)(\lambda_k)\\
& = & a_k(1 - |\Theta(\lambda_k)|^2)^\frac{1}{2}(1-\abs{\lambda_k}^2)^{-\frac{1}{2}}-\Theta(\lambda_k) T_{\overline{\Theta}}(f_0)(\lambda_k)\quad\forall k\geq N_1 
\end{eqnarray*}
and $\norm{\tilde{f}_0}_2\leq\norm{f_0}_2\leq (1+\varepsilon)\norm{a}_{N_1,\ell^2}$.  Rearranging the above,  for $k \ge N_1$ we have
\begin{eqnarray*}
\abs{\tilde{f}_0(\lambda_k)(1 - |\Theta(\lambda_k)|^2)^{-\frac{1}{2}}(1-\abs{\lambda_k}^2)^{\frac{1}{2}}-a_k} & = & \abs{\Theta(\lambda_k) T_{\overline{\Theta}}(f_0)(\lambda_k)(1 - |\Theta(\lambda_k)|^2)^{-\frac{1}{2}}(1-\abs{\lambda_k}^2)^{\frac{1}{2}}}\\
& \leq &  \kappa_{N_1}(1 - \kappa_{N_1}^2)^{-\frac{1}{2}} \norm{f_0}_2\\
&\leq& (1+\varepsilon) \kappa_{N_1}(1 - \kappa_{N_1}^2)^{-\frac{1}{2}}  \norm{a}_{N_1,\ell^2}.
\end{eqnarray*}
We claim that $\{a^{(1)}_n\}=\{\tilde{f}_0(\lambda_n)(1 - |\Theta(\lambda_n)|^2)^{-\frac{1}{2}}(1-\abs{\lambda_n}^2)^{\frac{1}{2}} - a_n\}\in\ell^2$ and that there is a constant $N_2$ depending only on $\varepsilon$ and the Carleson measure given by the thin sequence $\{\lambda_n\}$ such that 
\begin{equation}\label{a1} \norm{a^{(1)}}_{N_2,\ell^2}\leq (1+\varepsilon)^2\kappa_{N_1}(1 - \kappa_{N_1}^2)^{-\frac{1}{2}} \norm{a}_{N_1,\ell^2}.\end{equation} 

 Since the sequence $\{\lambda_n\}$ is thin and distinct, it hence generates an $H^2$ Carleson measure with norm at most $(1+\varepsilon)$; that is, we have the existence of $N_2 \ge N_1$ such that $\kappa_{N_2}(1 - \kappa_{N_2}^2)^{-\frac{1}{2}} \le \kappa_{N_1}(1 - \kappa_{N_1}^2)^{-\frac{1}{2}}$ and
\begin{eqnarray*}
\norm{a^{(1)}}_{N_2,\ell^2} & = & \left(\sum_{k\geq N_2} \abs{\Theta(\lambda_k)}^2 \abs{T_{\overline{\Theta}}(f_0)(\lambda_k)}^2 (1 - |\Theta(\lambda_k)|^2)^{-1}(1-\abs{\lambda_k}^2)\right)^{\frac{1}{2}}\\
& \leq &   (1+\varepsilon) \kappa_{N_2}(1 - \kappa_{N_2}^2)^{-\frac{1}{2}} \norm{T_{\overline{\Theta}}f_0}_2 \nonumber\\
& \leq &(1+\varepsilon) \kappa_{N_2}(1 - \kappa_{N_2}^2)^{-\frac{1}{2}}  \norm{f_0}_2\nonumber\\
& \leq & (1+\varepsilon)^2 \kappa_{N_1} (1 - \kappa_{N_1}^2)^{-\frac{1}{2}} \norm{a}_{N_1,\ell^2}\nonumber<\infty,
\end{eqnarray*}
completing the proof of the claim.

We will now iterate these estimates and ideas.  Let $\widetilde{a^{(1)}_n}=-\frac{a^{(1)}_n}{(1 + \varepsilon)^2\kappa_{N_1} (1 - \kappa_{N_1}^2)^{-\frac{1}{2}} }$ for $n \ge N_2$ and $\widetilde{a^{(1)}_n} = 0$ otherwise.  Then from (\ref{a1})
we have that $\norm{\widetilde{a^{(1)}}}_{N_1,\ell^2} = \norm{\widetilde{a^{(1)}}}_{N_2,\ell^2}  \le \norm{a}_{N_1,\ell^2}$.  Since $\{\lambda_n\}$ is an $EIS_{H^2}$ we may choose $f_1\in H^2$  with 
$$f_1(\lambda_n)=\widetilde{a_n^{(1)}}(1 - |\Theta(\lambda_n)|^2)^{\frac{1}{2}}(1-\abs{\lambda_n}^2)^{-\frac{1}{2}}~\mbox{for all}~n\geq N_1$$ and, letting $\widetilde{f}_1 = P_{K_\Theta}(f_1)$, we have $$ \norm{\tilde{f}_1}_{2}\leq\norm{f_1}_{2}\leq (1+\varepsilon)\norm{\widetilde{a^{(1)}}}_{N_1,\ell^2}\leq (1+\varepsilon)\norm{a}_{N_1,\ell^2}.$$
As above,  
\begin{eqnarray*}
\widetilde{f}_1(\lambda_k) & = & f_1(\lambda_k)-\Theta(\lambda_k) T_{\overline{\Theta}}(f_1)(\lambda_k)\\
& = & \widetilde{a_k^{(1)}}(1 - |\Theta(\lambda_k)|^2)^{\frac{1}{2}}(1-\abs{\lambda_k}^2)^{-\frac{1}{2}}-\Theta(\lambda_k) T_{\overline{\Theta}}(f_1)(\lambda_k)\quad\forall k\geq N_1. 
\end{eqnarray*}
And, for $k \ge N_1$ we have
\begin{eqnarray*}
\abs{\tilde{f}_1(\lambda_k)(1 - |\Theta(\lambda_k)|^2)^{-\frac{1}{2}}(1-\abs{\lambda_k}^2)^{\frac{1}{2}}-\widetilde{a_k^{(1)}}} & = & \abs{\Theta(\lambda_k) T_{\overline{\Theta}}(f_1)(\lambda_k)(1 - |\Theta(\lambda_k)|^2)^{-\frac{1}{2}}(1-\abs{\lambda_k}^2)^{\frac{1}{2}}}\\
& \leq &  \kappa_{N_1}(1 - \kappa_{N_1}^2)^{-\frac{1}{2}}  \norm{f_1}_2\\
& \leq & (1+\varepsilon)\kappa_{N_1}(1 - \kappa_{N_1}^2)^{-\frac{1}{2}}   \norm{a}_{N_1,\ell^2}.
\end{eqnarray*}
Using the definition of $\widetilde{a^{(1)}}$, for $k \ge N_2$ one arrives at
\begin{eqnarray*}
\abs{\left((1+\varepsilon)^2\kappa_{N_1}(1 - \kappa_{N_1}^2)^{-\frac{1}{2}} \tilde{f}_1(\lambda_k)+\tilde{f}_0(\lambda_k)\right)(1 - |\Theta(\lambda_k)|^2)^{-\frac{1}{2}}(1-\abs{\lambda_k}^2)^{\frac{1}{2}}-a_k}\\
\leq (1+\varepsilon)^3\kappa_{N_1}^2 (1 - \kappa_{N_1}^2)^{-1}\norm{a}_{N_1,\ell^2}.
\end{eqnarray*}

We continue this procedure, constructing sequences $a^{(j)}\in \ell^2$ and functions $\tilde{f}_j\in K_{\Theta}$ such that 
$$\norm{a^{(j)}}_{N_1,\ell^2}\leq (1+\varepsilon)^{2j}\left(\frac{\kappa_{N_1}}{(1 - \kappa_{N_1}^2)^{\frac{1}{2}}}\right)^j\norm{a}_{N_1,\ell^2},$$
$$
\left\vert \frac{(1-\abs{\lambda_k}^2)^{\frac{1}{2}}}{(1 - |\Theta(\lambda_k)|^2)^{\frac{1}{2}}} \left(\sum_{l=0}^{j} (1+\varepsilon)^{2l}\left(\frac{\kappa_{N_1}}{(1 - \kappa_{N_1}^2)^{\frac{1}{2}}}\right)^{l}\tilde{f}_l(\lambda_k)\right) -a_k \right\vert\leq \left(1+\varepsilon\right)^{2j+1}\left(\frac{\kappa_{N_1}}{(1 - \kappa_{N_1}^2)^{\frac{1}{2}}}\right)^{j+1}\left\Vert a\right\Vert_{N_1,\ell^2},
$$
and $$\norm{\tilde{f}_j}_2\leq (1+\varepsilon)\norm{a}_{N_1,\ell^2}~\mbox{for all}~j\in \N.$$  Define $$F=\sum_{j = 0}^{\infty} (1+\varepsilon)^{2j} \left(\frac{\kappa_{N_1}}{(1 - \kappa_{N_1}^2)^{\frac{1}{2}}}\right)^j \tilde{f}_j.$$  Then $F\in  K_{\Theta}$ since each $\tilde{f}_j\in K_{\Theta}$ and, since $\kappa_m \to 0$, we may assume that $$(1 + \varepsilon)^2\left(\frac{\kappa_{N_1}}{(1 - \kappa_{N_1}^2)^{\frac{1}{2}}}\right) < 1.$$ So,  

$$\norm{F}_2\leq \frac{(1+\varepsilon)}{1 - (1+\varepsilon)^2\left(\frac{\kappa_{N_1}}{\left(1 - \kappa_{N_1}^2\right)^{\frac{1}{2}}}\right)} \norm{a}_{N_1,\ell^2}.$$ 
For this $\varepsilon$, consider $\varepsilon_M < \varepsilon$ with $\frac{(1+\varepsilon_M)}{1 - (1+\varepsilon_M)^2\left(\frac{\kappa_{N_M}}{\left(1 - \kappa_{N_M}^2\right)^{\frac{1}{2}}}\right)}<1+\varepsilon$.  Then, using the process above, we obtain $F_M$ satisfying $F_M \in K_\Theta, \|F_M\|_2 \le  (1 + \varepsilon) \|a\|_{M, \ell^2}$ and $F_M(\lambda_n)\|K_{\lambda_n}\|^{-1}_\mathcal{H} = a_n$ for $n \ge M$.  Taking $N(\varepsilon) = M$, we see that $F_M$ satisfies the exact interpolation conditions, completing the proof of the theorem.

We present an alternate method to prove the equivalence between $(1)$ and $(7)$.  As noted above, by Theorem \ref{EISiffASI} it is true that $(7)\Leftrightarrow (8)$ and thus it suffices to prove that $(1)\Rightarrow (8)\Leftrightarrow (7)$.  Let $\varepsilon>0$ be given.  Select a sequence $\{\delta_N\}$ with $\delta_N\to 0$ as $N\to\infty$.  Since $(1)$ holds, then for large $N$ and $a\in \ell^2$ it is possible to find $f_N\in H^2$ so that 
$$
f_N(a_n)(1-\left\vert\lambda_n\right\vert^2)^{\frac{1}{2}}=\left\Vert a\right\Vert_{N,\ell^2}^{-1} a_n\quad n\geq N
$$
with $\left\Vert f_N\right\Vert_{2}\leq 1+\delta_N$.  Now observe that we can write $f_N=h_N+\Theta g_N$ with $h_N\in K_\Theta$.  Since $h_N$ and $g_N$ are orthogonal projections of $f_N$ onto subspaces of $H^2$, we also have that $\left\Vert h_N\right\Vert_{2}\leq 1+\delta_N$ and similarly for $g_N$.

By the properties of the functions above we have that:
\begin{equation*}
h_N(\lambda_n)\left(\frac{1-\left\vert \lambda_n\right\vert^2}{1-\left\vert \Theta(\lambda_n)\right\vert^2}\right)^{\frac{1}{2}}=f_N(\lambda_n)\left(\frac{1-\left\vert \lambda_n\right\vert^2}{1-\left\vert \Theta(\lambda_n)\right\vert^2}\right)^{\frac{1}{2}}-\Theta(\lambda_n)g_N(\lambda_n)\left(\frac{1-\left\vert \lambda_n\right\vert^2}{1-\left\vert \Theta(\lambda_n)\right\vert^2}\right)^{\frac{1}{2}}.
\end{equation*}
Hence, one deduces that 
\begin{eqnarray*}
\left\Vert \left\{h_N(\lambda_n)\left(\frac{1-\left\vert \lambda_n\right\vert^2}{1-\left\vert \Theta(\lambda_n)\right\vert^2}\right)^{\frac{1}{2}}-\frac{a_n}{\left\Vert a\right\Vert_{N,\ell^2}}\right\}\right\Vert_{N,\ell^2} & \leq & \left\Vert \left\{f_N(\lambda_n)\left(\frac{1-\left\vert \lambda_n\right\vert^2}{1-\left\vert \Theta(\lambda_n)\right\vert^2}\right)^{\frac{1}{2}}-\frac{a_n}{\left\Vert a\right\Vert_{N,\ell^2}}\right\}\right\Vert_{N,\ell^2}\\
&  & + \left\Vert \left\{\Theta(\lambda_n)g_N(\lambda_n)\left(\frac{1-\left\vert \lambda_n\right\vert^2}{1-\left\vert \Theta(\lambda_n)\right\vert^2}\right)^{\frac{1}{2}}\right\}\right\Vert_{N,\ell^2}\\
& \leq & \left\Vert \left\{\frac{a_n}{\left\Vert a\right\Vert_{N,\ell^2}}\left(\left(\frac{1}{1-\left\vert \Theta(\lambda_n)\right\vert^2}\right)^{\frac{1}{2}}-1\right)\right\}\right\Vert_{N,\ell^2}\\
& & +\frac{\sup_{m\geq N}\left\vert\Theta(\lambda_m)\right\vert}{(1-\kappa_N^2)^{\frac{1}{2}}} \left\Vert \left\{g_N(\lambda_n)\left(1-\left\vert \lambda_n\right\vert^2\right)^{\frac{1}{2}}\right\}\right\Vert_{N,\ell^2}.
\end{eqnarray*}
Now for $x$ sufficiently small and positive we have that $\frac{1}{\sqrt{1-x}}-1=\frac{1-\sqrt{1-x}}{\sqrt{1-x}}\lesssim \frac{x}{\sqrt{1-x}}$.  Applying this with $x=\sup_{m\geq N} \left\vert\Theta(\lambda_m)\right\vert$ gives that:
$$
\left\Vert \left\{h_N(\lambda_n)\left(\frac{1-\left\vert \lambda_n\right\vert^2}{1-\left\vert \Theta(\lambda_n)\right\vert^2}\right)^{\frac{1}{2}}-\frac{a_n}{\left\Vert a\right\Vert_{N,\ell^2}}\right\}\right\Vert_{N,\ell^2} \leq \frac{\sup_{m\geq N}\left\vert\Theta(\lambda_m)\right\vert}{(1-\kappa_N^2)^{\frac{1}{2}}} \left(1+\left\Vert \left\{g_N(\lambda_n)\left(1-\left\vert \lambda_n\right\vert^2\right)^{\frac{1}{2}}\right\}\right\Vert_{N,\ell^2}\right).
$$
Define $H_N=(1+\delta_N)^{-1} \left\Vert a\right\Vert_{N,\ell^2} h_N$, and then we have $H_N\in K_{\Theta}$ and $\left\Vert H_N\right\Vert_{2}\leq \left\Vert a\right\Vert_{N,\ell^2}$.  Using the last estimate and adding and subtracting the quantity $\frac{a_n}{(1+\delta_N)}$ yields that:
\begin{eqnarray*}
\left\Vert \left\{H_N(\lambda_n)\left(\frac{1-\left\vert \lambda_n\right\vert^2}{1-\left\vert \Theta(\lambda_n)\right\vert^2}\right)^{\frac{1}{2}}-a_n\right\}\right\Vert_{N,\ell^2}   \leq & &  
\\  \left(\frac{\sup_{m\geq N}\left\vert\Theta(\lambda_m)\right\vert}{(1+\delta_N)(1-\kappa_N^2)^{\frac{1}{2}}} \left(1+\left\Vert \left\{g_N(\lambda_n)\left(1-\left\vert \lambda_n\right\vert^2\right)^{\frac{1}{2}}\right\}\right\Vert_{N,\ell^2}\right)+\delta_N\right)\left\Vert a\right\Vert_{N,\ell^2}.
\end{eqnarray*}
Note that the quantity: 
$$
\left(\frac{\sup_{m\geq N}\left\vert\Theta(\lambda_m)\right\vert}{(1+\delta_N)(1-\kappa_N^2)^{\frac{1}{2}}} \left(1+\left\Vert \left\{g_N(\lambda_n)\left(1-\left\vert \lambda_n\right\vert^2\right)^{\frac{1}{2}}\right\}\right\Vert_{N,\ell^2}\right)+\delta_N\right)\lesssim \delta_N+\sup_{m\geq N}\left\vert\Theta(\lambda_m)\right\vert.
$$
Here we have used that the sequence $\{\lambda_n\}$ is by hypothesis an interpolating sequence and hence: $\left\Vert \left\{g_N(\lambda_n)\left(1-\left\vert \lambda_n\right\vert^2\right)^{\frac{1}{2}}\right\}\right\Vert_{N,\ell^2}\lesssim \left\Vert g_N\right\Vert_{2}\leq 1+\delta_N$.  Since by hypothesis we have that $\delta_N+\sup_{m\geq N}\left\vert\Theta(\lambda_m)\right\vert\to 0$ as $N\to\infty$, it is possible to make this less than the given $\varepsilon>0$, and hence we get a function $H_N$ satisfying the properties for $\{\lambda_n\}$ to be $AIS_{K_\Theta}$.
\end{proof}

\begin{rem} The proof above also gives an estimate on the norm of the interpolating function in the event that $\sup_n |\Theta(\lambda_n)| \le \kappa < 1$, but $(1 + \varepsilon)$ is no longer the best estimate. \end{rem}

\subsection{Carleson Measures in Model Spaces}
\label{CMMS}

From Theorem~\ref{main}, \eqref{C1} and \eqref{C2}, we have a Carleson measure statement for thin sequences in the Hardy space $H^2$. In this section, we obtain an equivalence in model spaces.

We now consider the embedding constants in the case of model spaces. As before, given a positive measure $\mu$ on $\D$, we denote the (possibly infinite) constant

$$
    \CC_{\Theta}(\mu) =  \sup_{f \in K_{\Theta}, f \neq 0} \frac{\|f\|^2_{L^2(\D, \mu)}}{\|f\|^2_2}
$$
as the Carleson embedding constant of $\mu$ on $K_{\Theta}$ and
$$
    \RR_{\Theta}(\mu) =  \sup_{z} \|k^{\Theta}_z\|_{L^2(\D, \mu)}^2
$$
as the embedding constant of $\mu$ on the reproducing kernel of $K_\Theta$ (recall that the kernels $k^{\Theta}_z$ are normalized).  It is known that for general measure $\mu$ the constants $\RR_{\Theta}(\mu)$ and $\CC_{\Theta}(\mu)$ are not equivalent, \cite{NV}.  The complete geometric characterization of the measures for which $\CC_{\Theta}(\mu)$ is finite is contained in \cite{LSUSW}.  However, we always have that
$$
   \RR_\Theta(\mu) \le \CC_\Theta(\mu).
$$
For $N > 1$, let 
$$
\sigma_N = \sum_{k \ge N} \left\Vert K_{\lambda_k}^{\Theta}\right\Vert^{-2}\delta_{\lambda_k}=\sum_{k \ge N} \frac{1-\left\vert \lambda_k\right\vert^2}{1-\left\vert \Theta(\lambda_k)\right\vert^2}\delta_{\lambda_k}.
$$
Note that for each $f \in K_{\Theta}$
\begin{equation}\label{munorm}
        \| f\|^2_{L^2(\D, \sigma_N)} = \sum_{k=N}^\infty \frac{(1 - |\lambda_k|^2)}{(1-\left\vert \Theta(\lambda_k)\right\vert^2)} |f(\lambda_k)|^2 = \sum_{k=N}^\infty |\langle f, k^{\Theta}_{\lambda_k}\rangle|^2,
\end{equation}
and therefore we see that 
\begin{equation}
\label{e:CETests}
1 \le  \RR_\Theta(\sigma_N) \le \CC_\Theta(\sigma_N).
\end{equation}

By working in a restricted setting and imposing a condition on $\{\Theta(\lambda_n)\}$ we have the following.

\begin{thm}
\label{thm:Carleson} Suppose $\Lambda = \{\lambda_n\}$ is a sequence in $\mathbb{D}$ and $\Theta$ is a nonconstant inner function such that $\kappa_m := \sup_{n \ge m}|\Theta(\lambda_n)|\to 0$. 
For $N > 1$, let 
$$
\sigma_N = \sum_{k \ge N} \left\Vert K_{\lambda_k}^{\Theta}\right\Vert^{-2}\delta_{\lambda_k}=\sum_{k \ge N} \frac{1-\left\vert \lambda_k\right\vert^2}{1-\left\vert \Theta(\lambda_k)\right\vert^2}\delta_{\lambda_k}.
$$
Then the following are equivalent:
\begin{enumerate} 
\item $\Lambda$ is a thin sequence;
\item $ \CC_\Theta(\sigma_N) \to 1$ as $N \to \infty$;
\item $ \RR_\Theta(\sigma_N) \to 1$ as $N \to \infty$.
\end{enumerate}
\end{thm}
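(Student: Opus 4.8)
The plan is to prove the cycle $(1)\Rightarrow(2)\Rightarrow(3)\Rightarrow(1)$, everything being driven by the hypothesis $\kappa_m\to 0$: for $k\ge N$ the model-space weights obey
$$1-|\lambda_k|^2\ \le\ \|K_{\lambda_k}^\Theta\|^{-2}=\frac{1-|\lambda_k|^2}{1-|\Theta(\lambda_k)|^2}\ \le\ \frac{1-|\lambda_k|^2}{1-\kappa_N^2},$$
so, writing $\mu_N=\sum_{k\ge N}(1-|\lambda_k|^2)\delta_{\lambda_k}$ for the Hardy-space measure of Theorem~\ref{main}, we have $\mu_N\le\sigma_N\le(1-\kappa_N^2)^{-1}\mu_N$ as measures on $\D$. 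For $(1)\Rightarrow(2)$: restricting test functions from $H^2$ down to $K_\Theta$ and applying this comparison gives $\CC_\Theta(\sigma_N)\le(1-\kappa_N^2)^{-1}\CC(\mu_N)$; a thin sequence has thin (hence interpolating) tails, so Theorem~\ref{main} — the equivalence of \eqref{thin1} and \eqref{C1} — yields $\CC(\mu_N)\to1$, and together with $\kappa_N\to0$ and the lower bound $\CC_\Theta(\sigma_N)\ge1$ from \eqref{e:CETests} we get $\CC_\Theta(\sigma_N)\to1$. The implication $(2)\Rightarrow(3)$ is immediate from $1\le\RR_\Theta(\sigma_N)\le\CC_\Theta(\sigma_N)$, which is \eqref{e:CETests}.

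The substance of the theorem is $(3)\Rightarrow(1)$. Write $\rho(z,w)=\left|\tfrac{z-w}{1-\overline{w}z}\right|$. Using \eqref{munorm} in the form $\RR_\Theta(\sigma_N)=\sup_{z\in\D}\sum_{k\ge N}|\langle k_z^\Theta,k_{\lambda_k}^\Theta\rangle|^2$ and testing at $z=\lambda_j$ for $j\ge N$ — where the diagonal term equals $1$ — we obtain $\sum_{k\ge N,\,k\ne j}|\langle k_{\lambda_j}^\Theta,k_{\lambda_k}^\Theta\rangle|^2\le\RR_\Theta(\sigma_N)-1$ for every $j\ge N$. A direct computation with $K_\lambda^\Theta(z)=\tfrac{1-\overline{\Theta(\lambda)}\Theta(z)}{1-\overline{\lambda}z}$ gives
$$|\langle k_{\lambda_j}^\Theta,k_{\lambda_k}^\Theta\rangle|^2=\frac{|1-\overline{\Theta(\lambda_j)}\Theta(\lambda_k)|^2}{(1-|\Theta(\lambda_j)|^2)(1-|\Theta(\lambda_k)|^2)}\,(1-\rho(\lambda_j,\lambda_k)^2)\ \ge\ 1-\rho(\lambda_j,\lambda_k)^2,$$
the inequality because $|1-\overline{w}z|^2=(1-|w|^2)(1-|z|^2)+|w-z|^2\ge(1-|w|^2)(1-|z|^2)$. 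Hence $\sum_{k\ge N,\,k\ne j}(1-\rho(\lambda_j,\lambda_k)^2)\le\RR_\Theta(\sigma_N)-1$ for all $j\ge N$, a quantity tending to $0$.

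It remains to promote this tail estimate to $\delta_j=\prod_{k\ne j}\rho(\lambda_j,\lambda_k)\to1$, and this is the one delicate point, because $\delta_j$ also sees the finitely many indices $k<N$ omitted above. The plan is first to rule out interior accumulation: if $|\lambda_j|\not\to1$, a subsequence $\lambda_{j_m}$ converges to some $w\in\D$; picking $N$ with $\RR_\Theta(\sigma_N)<7/4$ and two tail indices $j,j'\ge N$ with $\rho(\lambda_j,w),\rho(\lambda_{j'},w)<1/4$, the strong triangle inequality for the pseudohyperbolic metric forces $1-\rho(\lambda_j,\lambda_{j'})^2>3/4$, contradicting the bound just obtained. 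So $|\lambda_j|\to1$, and then for each fixed $i$, $1-\rho(\lambda_j,\lambda_i)^2=\tfrac{(1-|\lambda_j|^2)(1-|\lambda_i|^2)}{|1-\overline{\lambda_j}\lambda_i|^2}\to0$ as $j\to\infty$. Now, given $\varepsilon\in(0,1)$, choose $N$ with $\RR_\Theta(\sigma_N)-1<\varepsilon$ and then $N'\ge N$ with $\sum_{i<N}(1-\rho(\lambda_j,\lambda_i)^2)<\varepsilon$ for $j\ge N'$; adding, $\sum_{k\ne j}(1-\rho(\lambda_j,\lambda_k)^2)<2\varepsilon$ for $j\ge N'$ (so in particular the product defining $\delta_j$ converges), and by $1-\rho\le1-\rho^2$ and $\prod(1-y_k)\ge1-\sum y_k$,
$$\delta_j=\prod_{k\ne j}\rho(\lambda_j,\lambda_k)\ \ge\ \prod_{k\ne j}\rho(\lambda_j,\lambda_k)^2\ \ge\ 1-\sum_{k\ne j}(1-\rho(\lambda_j,\lambda_k)^2)\ >\ 1-2\varepsilon.$$
Letting $\varepsilon\to0$ gives $\delta_j\to1$, i.e.\ $\Lambda$ is thin, closing the cycle.

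I expect this last paragraph — separating the honestly infinite tail sum (controlled by hypothesis $(3)$) from the finite head (controlled via escape of $\{\lambda_j\}$ to the boundary) — to be the only place needing genuine care; the remainder consists of the two routine comparisons $\mu_N\le\sigma_N\le(1-\kappa_N^2)^{-1}\mu_N$ and $\RR_\Theta\le\CC_\Theta$ together with the elementary kernel identity above.
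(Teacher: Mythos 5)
Your proof is correct, and the interesting direction $(3)\Rightarrow(1)$ proceeds by a genuinely different route than the paper. For $(1)\Rightarrow(2)$ you use the measure comparison $\mu_N\le\sigma_N\le(1-\kappa_N^2)^{-1}\mu_N$ together with $K_\Theta\subset H^2$; this is exactly the paper's ``alternate method'' (the primary proof there goes through the $AOB$ property of $\{k_{\lambda_n}^\Theta\}$). For $(2)\Rightarrow(3)$ the two proofs coincide. For $(3)\Rightarrow(1)$ the paper lower-bounds $\|k_z^\Theta\|_{L^2(\sigma_N)}^2$ by the $H^2$-kernel expression with a spurious loss factor $(1+\kappa_N)^{-2}$, uses the Weierstrass inequality to recover the tail products $\prod_{k\ge N, k\ne M}\rho(\lambda_k,\lambda_M)$, and then controls the finite head $k<N_0$ via a chain of kernel identities that tries to squeeze $1-\rho(\lambda_k,\lambda_M)^2$ into $\|k_{\lambda_M}^\Theta\|_{L^2(\sigma_M)}^2-1$. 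Your argument is cleaner and more transparent on both counts: the exact identity $|\langle k_{\lambda_j}^\Theta,k_{\lambda_k}^\Theta\rangle|^2=\frac{|1-\overline{\Theta(\lambda_j)}\Theta(\lambda_k)|^2}{(1-|\Theta(\lambda_j)|^2)(1-|\Theta(\lambda_k)|^2)}\,(1-\rho(\lambda_j,\lambda_k)^2)\ge 1-\rho(\lambda_j,\lambda_k)^2$ requires no $\kappa_N$ loss at all (so this part does not even use the hypothesis $\kappa_m\to 0$); and for the head you first establish $|\lambda_j|\to 1$ by the pseudohyperbolic triangle-inequality contradiction (if two tail points were both $\rho$-close to an interior $w$, the single off-diagonal term $1-\rho(\lambda_j,\lambda_{j'})^2>3/4$ would already overwhelm $\RR_\Theta(\sigma_N)-1$), after which $1-\rho(\lambda_j,\lambda_i)^2\to 0$ for each fixed $i$ is immediate from $|1-\overline{\lambda_j}\lambda_i|\ge 1-|\lambda_i|>0$. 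This decoupling of ``tail sum small'' from ``head escapes to the boundary'' is a nice improvement: it is elementary, avoids the delicate bookkeeping with $\sigma_M$ at off-range indices, and makes the logical role of hypothesis $(3)$ for each piece completely explicit. Both proofs ultimately close with the same Weierstrass step $\prod(1-y_k)\ge 1-\sum y_k$ and $\rho\ge\rho^2$.
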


\begin{proof}
We have $(2)\Rightarrow (3)$ by testing on the function $f=k_z^{\Theta}$ for all $z\in\mathbb{D}$, which is nothing more then \eqref{e:CETests}.

We next focus on $(1)\Rightarrow(2)$.  Let $f \in K_{\Theta}$
and let the sequence $a$ be defined by $a_j = \left\|K_{\lambda_j}\right\|^{-1}f(\lambda_j)$. By \eqref{munorm}, 
$\left\|a\right\|_{N, \ell^2}^2 = \left\| f\right\|^2_{L^2(\D, \sigma_N)}$, and since $\{k_{\lambda_j}^{\Theta}\}$ is an $AOB$, there exists $C_N$ such that
\begin{align*}
\left\|a\right\|_{N,\ell^2}^2 & =  \sum_{j \ge N} \left\|K_{\lambda_j}^\Theta\right\|^{-2}_{K_{\Theta}} |f(\lambda_j)|^2 =  \left\langle f, \sum_{j \ge N} a_j k_{\lambda_j}^\Theta \right\rangle_{K_{\Theta}} \le  \left\| f\right\|_{2} \left\|\sum_{j \ge N} a_j k_{\lambda_j}^\Theta\right\|_{K_{\Theta}} \le  C_N \left\| f\right\|_{2} \left\|a\right\|_{N,\ell^2}.
\end{align*}
By (1) and \cite{CFT}*{Theorem 5.2}, we know that $C_N \to 1$ and since we have established that $\|f\|_{L^2(\D, \sigma_N)} \le C_N \|f\|_2$, (1) $\Rightarrow$ (2) follows.  

An alternate way to prove this is to use Theorem \ref{main}, $(2)\Rightarrow(5)$, and the hypothesis on $\Theta$.  Since it is possible to then show that $\frac{\mathcal{C}_{\Theta}(\sigma_N)}{\mathcal{C}(\mu_N)}\to 1$.  Indeed, given $\varepsilon>0$, we have that $1\leq\mathcal{C}(\mu_M)$ for all $M$, and since $\{\lambda_n\}$ is thin there exists a $N$ such that  $\mathcal{C}(\mu_M)<1+\varepsilon$ for all $M\geq N$.  Hence, $1\leq \mathcal{C}(\mu_M)<1+\varepsilon$ for all $M\geq N$.  These facts easily lead to:
$$
\frac{1}{1+\varepsilon}\leq\frac{\mathcal{C}_{\Theta}(\sigma_M)}{\mathcal{C}(\mu_M)}
$$
Further, since $\Theta$ tends to zero on the sequence $\{\lambda_n\}$ there is an integer, without loss we may take it to be $N$, so that $\frac{1}{1-\left\vert \Theta(\lambda_n)\right\vert^2}<1+\varepsilon$ for all $n\geq N$.  From this we deduce that:
$$
\frac{\mathcal{C}_{\Theta}(\sigma_M)}{\mathcal{C}(\mu_M)}< (1+\varepsilon)\frac{\sup\limits_{f\in K_{\theta}} \sum_{n\geq M} (1-\left\vert \lambda_m\right\vert^2)\left\vert f(\lambda_m)\right\vert^2}{\mathcal{C}(\mu_M)}\leq (1+\varepsilon)
$$
in the last estimate we used that $K_\Theta\subset H^2$ and so the suprema appearing in the numerator is always at most the expression in the denominator.  Combining the estimates we have that for $M\geq N$, that:
$$
\frac{1}{1+\varepsilon}\leq \frac{\mathcal{C}_{\Theta}(\sigma_M)}{\mathcal{C}(\mu_M)}<1+\varepsilon
$$
which yields the conclusion about the ratio tending to $1$ as $N\to \infty$.

Now consider $(3)\Rightarrow (1)$ and compute the quantity $\mathcal{R}_\Theta(\sigma_N)$.  In what follows, we let $\Lambda_N$ denote the tail of sequence, $\Lambda_N=\{\lambda_k: k\geq N\}$.  Note that we have $\left\vert 1-\overline{a}b\right\vert\geq 1-\left\vert a\right\vert$.  Using this estimate we see that:
\begin{eqnarray*}
\sup_{z\in\mathbb{D}} \|k^{\Theta}_z\|_{L^2(\D, \sigma_N)}^2 & = & \sup_{z\in\mathbb{D}} \sum_{k\geq N} \frac{(1-\left\vert \lambda_k\right\vert^2)}{(1-\left\vert \Theta(\lambda_k)\right\vert^2)} \frac{(1-\left\vert z\right\vert^2)}{(1-\left\vert \Theta(z)\right\vert^2)}\frac{\left\vert 1-\Theta(z)\overline{\Theta(\lambda_k)}\right\vert^2}{\left\vert 1-z\overline{\lambda_k}\right\vert^2}\\
 & \geq & \sup_{z\in\mathbb{D}} \sum_{k\geq N} \frac{(1-\left\vert \lambda_k\right\vert^2)(1-\left\vert z\right\vert^2)}{\left\vert 1-z\overline{\lambda_k}\right\vert^2} \frac{(1-\left\vert \Theta(z)\right\vert)(1-\left\vert \Theta(\lambda_k)\right\vert)}{(1-\left\vert \Theta(z)\right\vert^2)(1-\left\vert \Theta(\lambda_k)\right\vert^2)}\\
 & = & \sup_{z\in\mathbb{D}} \sum_{k\geq N} \frac{(1-\left\vert \lambda_k\right\vert^2)(1-\left\vert z\right\vert^2)}{\left\vert 1-z\overline{\lambda_k}\right\vert^2} \frac{1}{(1+\left\vert \Theta(z)\right\vert)(1+\left\vert \Theta(\lambda_k)\right\vert)}\\
  & \geq & \sup_{z\in\Lambda_N} \sum_{k\geq N} \frac{(1-\left\vert \lambda_k\right\vert^2)(1-\left\vert z\right\vert^2)}{\left\vert 1-z\overline{\lambda_k}\right\vert^2} \frac{1}{(1+\left\vert \Theta(z)\right\vert)(1+\left\vert \Theta(\lambda_k)\right\vert)}\\
    & \geq & \frac{1}{(1+\kappa_N)^2}\sup_{z\in\Lambda_N} \sum_{k\geq N} \frac{(1-\left\vert \lambda_k\right\vert^2)(1-\left\vert z\right\vert^2)}{\left\vert 1-z\overline{\lambda_k}\right\vert^2}.
\end{eqnarray*}
By the Weierstrass Inequality, we obtain for $M \ge N$ that
\begin{eqnarray}\label{wi}
 \prod_{k \geq N, k \neq M}  \left|  \frac{\lambda_k - \lambda_M}{1 - \bar \lambda_k \lambda_M}\right|^2 
& = & \prod_{k \geq N, k \neq M} \left(  1-  \frac{(1 - |\lambda_k|^2)(1 - |\lambda_M|^2)}{|1 - \bar \lambda_k \lambda_M|^2} \right)\nonumber\\  
 & \ge & 1 - \sum_{k \geq N, k \neq M}  \frac{(1- |\lambda_M|^2)(1- |\lambda_k|^2)}{ | 1 - \bar \lambda_k \lambda_M|^2}.
\end{eqnarray}
Thus, by \eqref{wi} we have for $M \ge N$,
\begin{eqnarray*}
\frac{1}{(1+\kappa_N)^2}\sup_{z\in\Lambda_N} \sum_{k\geq N} \frac{(1-\left\vert \lambda_k\right\vert^2)(1-\left\vert z\right\vert^2)}{\left\vert 1-z\overline{\lambda_k}\right\vert^2} 
& \ge  & \frac{1}{(1+\kappa_N)^2}\left(\sum_{k \geq N, k\neq M} \frac{(1-\left\vert \lambda_k\right\vert^2)(1-\left\vert \lambda_M\right\vert^2)}{\left\vert 1-\lambda_M\overline{\lambda_k}\right\vert^2} + 1\right)\\
& \ge & \frac{1}{(1+\kappa_N)^2}\left(1 - \prod_{k \geq N, k\neq M}  \left|  \frac{\lambda_k - \lambda_M}{1 - \bar \lambda_k \lambda_M}\right|^2 + 1\right).
\end{eqnarray*}
Now by assumption, recalling that $\kappa_N := \sup_{n \ge N}|\Theta(\lambda_n)|$, we have
$$\lim_{N \to \infty}\sup_{z\in\mathbb{D}} \|k^{\Theta}_z\|_{L^2(\D, \sigma_N)}^2 = 1~\mbox{ and }~\lim_{N \to \infty} \kappa_N = 0,$$ so
$$1 = \lim_{N \to \infty}\sup_{z\in\mathbb{D}} \|k^{\Theta}_z\|_{L^2(\D, \sigma_N)}^2
\ge \lim_{N \to \infty} \frac{1}{(1+\kappa_N)^2}\left(1 - \prod_{k \geq N, k\neq M}  \left|  \frac{\lambda_k - \lambda_M}{1 - \bar \lambda_k \lambda_M}\right|^2 + 1\right) \ge 1.$$
Therefore, for any $M \ge N$
\begin{equation}
\label{e:large}
\prod_{k \geq N, k\neq M}  \left|  \frac{\lambda_k - \lambda_M}{1 - \bar \lambda_k \lambda_M}\right| > 1 - \varepsilon~\mbox{as}~N \to \infty.
\end{equation}
Also, for any $\varepsilon>0$ there is an integer $N_0$ such that for all $M> N_0$ we have:
\begin{equation}
\label{e:bigk}
\prod_{k \geq N_0, k\neq M}  \left|  \frac{\lambda_k - \lambda_M}{1 - \bar \lambda_k \lambda_M}\right| >1-\varepsilon.
\end{equation}
Fix this value of $N_0$, and consider $k<N_0$.  
Further, for $k \ne M$ and $k<N_0$,
\begin{align*}
 1- \rho(\lambda_M, \lambda_k)^2 & =  1- \left|\frac{\lambda_k - \lambda_M}{1 - \bar\lambda_k \lambda_M}\right|^2 = \frac{(1- |\lambda_M|^2)(1- |\lambda_k|^2)}{ | 1 - \bar\lambda_k \lambda_M|^2}\\ 
&=  (1 - |\lambda_k|^2)\frac{(1 - |\lambda_M|^2)}{(1 - |\Theta(\lambda_M)|^2)} \frac{1 - |\Theta(\lambda_M)|^2}{\left\vert 1 - \bar \Theta(\lambda_M) \Theta(\lambda_k)\right\vert^2} \left|\frac{1 - \bar\Theta(\lambda_M) \Theta(\lambda_k)}{1 - \lambda_k \bar\lambda_M}\right|^2\\
& =   \frac{1 - |\Theta(\lambda_M)|^2}{\left\vert 1 - \bar \Theta(\lambda_M) \Theta(\lambda_k)\right\vert^2}(1 - |\lambda_k|^2) |k_{\lambda_M}^\Theta(\lambda_k)|^2\\
& =   \frac{1 - |\Theta(\lambda_M)|^2}{\left\vert 1 - \bar \Theta(\lambda_M) \Theta(\lambda_k)\right\vert^2} (1 - |\Theta(\lambda_k)|^2) \frac{(1 - |\lambda_k|^2)}{1 - |\Theta(\lambda_k)|^2} |k_{\lambda_M}^\Theta(\lambda_k)|^2\\
& \le   \frac{1 - |\Theta(\lambda_M)|^2}{\left\vert 1 - \bar \Theta(\lambda_M) \Theta(\lambda_k)\right\vert^2} \left( \|k_{\lambda_M}^\Theta\|_{L^2(\mathbb{D}, \sigma_M)}^2 - 1\right) \\
& \leq  \frac{1}{(1-\kappa_M)^2}\left( \|k_{\lambda_M}^\Theta\|_{L^2(\mathbb{D}, \sigma_M)}^2 - 1\right)\to 0 ~\mbox{as}~ M \to \infty,
\end{align*}
since $1\leq \|k_{\lambda_N}^\Theta\|_{L^2(\mathbb{D}, \sigma_N)}^2\leq\sup_{z} \|k_{z}^\Theta\|_{L^2(\mathbb{D}, \sigma_N)}^2$ and, by hypothesis, we have that $\kappa_N \to 0$ and 
$\mathcal{R}_{\Theta}(\sigma_N)\to 1$.  Hence, it is possible to choose an integer $M_0$ sufficiently large compared to $N_0$ so that for all $M>M_0$
\begin{equation*}
\rho(\lambda_k,\lambda_{M})>\left(1-\varepsilon\right)^{\frac{1}{N_0}}\quad k<N_0
\end{equation*}
which implies that 
\begin{equation}
\label{e:smallk}
\prod_{k<N_0} \rho(\lambda_k,\lambda_{M})>1-\varepsilon.
\end{equation}

Now given $\varepsilon>0$, first select $N_0$ as above in \eqref{e:bigk}.  Then select $M_0$ so that \eqref{e:large} holds.  Then for any $M>M_0$ by writing the product
$$
\prod_{k\neq M} \rho(\lambda_k,\lambda_M)=\prod_{k<N_0} \rho(\lambda_k,\lambda_{M})\prod_{k>N_0, k\neq M} \rho(\lambda_k,\lambda_M)>(1-\varepsilon)^2.
$$
For the first term in the product we have used \eqref{e:smallk} to conclude that it is greater than $1-\varepsilon$.  And for $M$ sufficiently large, by \eqref{e:bigk}, we have that the second term in the product is greater than $1-\varepsilon$ as well.  Hence, $B$ is thin as claimed.
\end{proof}

\section{Algebra Version} \label{asip_algebra} 

We now compare the model-space version of our results with an algebra version. Theorem~\ref{main} requires that our inner function satisfy $\Theta(\lambda_n) \to 0$ for a thin interpolating sequence $\{\lambda_n\}$ to be an $AIS_{K_\Theta}$ sequence. Letting $B$ denote the Blaschke product corresponding to the sequence $\{\lambda_n\}$, denoting the algebra of continuous functions on the unit circle by $C$, and  letting $H^\infty + C = \{f + g: f \in H^\infty, g \in C\}$ (see \cite{Sarason1} for more on this algebra), we can express this condition in the following way: $\Theta(\lambda_n) \to 0$ if and only if $\overline{B} \Theta \in H^\infty + C$. In other words, if and only if $B$ divides $\Theta$ in $H^\infty + C$, \cites{AG, GIS}. 

We let $\mathcal{B}$ be a Douglas algebra; that is, a uniformly closed subalgebra of $L^\infty$ containing $H^\infty$.   It will be helpful to use the maximal ideal space of our algebra. Throughout $M(\mathcal{B})$ denotes the maximal ideal space of the algebra $\mathcal{B}$; that is, the set of nonzero continuous multiplicative linear functionals on $\mathcal{B}$. 

We now consider thin sequences in uniform algebras. This work is closely connected to the study of such sequences in general uniform algebras (see \cite{GM}) and the special case $B = H^\infty$ is considered in \cite{HIZ}.  With the weak-$\star$ topology, $M(\mathcal{B})$ is a compact Hausdorff space.  In interpreting our results below, it is important to recall that each $x \in M(H^\infty)$ has a unique extension to a linear functional of norm one and, therefore, we may identify $M(\mathcal{B})$ with a subset of $M(H^\infty)$.    In this context, the condition we will require (see Theorem~\ref{main_algebra})  for an $EIS_\mathcal{B}$ sequence to be the same as an $AIS_\mathcal{B}$ sequence is that the sequence be thin near $M(\mathcal{B})$. We take the following as the definition (see \cite{SW}):

\begin{defin} An interpolating sequence $\{\lambda_n\}$ with corresponding Blaschke product $b$ is said to be \textnormal{thin near $M(\mathcal{B})$} if for any $0<\eta < 1$ there is a factorization $b = b_1 b_2$ with $b_1$ invertible in $\mathcal{B}$ and $$|b_2^\prime(\lambda_n)|(1 - |\lambda_n|^2) > \eta$$ for all $n$ such that $b_2(\lambda_n) = 0$. \end{defin}

We will be interested in two related concepts that a sequence can have.  We first introduce a norm on a sequence $\{a_n\}\in \ell^\infty$ that is induced by a second sequence $\{\lambda_n\}$ and a set $\mathcal{O}\supset M(\mathcal{B})$ that is open in $M(H^\infty)$.  Set $I_\mathcal{O}=\{n\in\Z: \lambda_n\in\mathcal{O}\}$.  Then we define
$$
\norm{a}_{\mathcal{O},\ell^\infty}=\sup\{ \abs{a_n}: n\in I_\mathcal{O}\}.
$$

\begin{defin}
A Blaschke sequence $\{\lambda_n\}$ is an \textnormal{eventual $1$-interpolating sequence in a Douglas algebra $\mathcal{B}$}, denoted $EIS_{\mathcal{B}}$,  if for every $\varepsilon > 0$ there exists an open set $\mathcal{O}\supset M(\mathcal{B})$ such that for each $\{a_n\} \in \ell^\infty$ there exists $f_{\mathcal{O}, a} \in H^\infty$ with
$$f_{\mathcal{O}, a}(\lambda_n) = a_n ~\mbox{for}~ \lambda_n\in\mathcal{O}  ~\mbox{and}~ \|f_{\mathcal{O}, a}\|_{\infty} \le (1 + \varepsilon) \|a\|_{\mathcal{O}, \ell^\infty}.$$
\end{defin} 

\begin{defin} A Blaschke sequence $\{\lambda_n\}$ is a \textnormal{strong asymptotic interpolating sequence in a Douglas algebra $\mathcal{B}$}, denoted $AIS_{\mathcal{B}}$, if for all $\varepsilon > 0$ there exists an open set $\mathcal{O}\supset M(\mathcal{B})$ such that for all sequences $\{a_n\} \in \ell^\infty$ there exists a function $G_{\mathcal{O}, a} \in H^\infty$ such that $\|G_{\mathcal{O}, a}\|_{\infty} \le \|a\|_{\mathcal{O},\ell^\infty}$ and 
$$\|\{G_{\mathcal{O}, a}(\lambda_n)  - a_n\}\|_{\mathcal{O}, \ell^\infty} < \varepsilon \|a\|_{\mathcal{O}, \ell^\infty}.$$ \end{defin}

\begin{thm}\label{EISiffASI_algebra} 
Let $\mathcal{B}$ be a Douglas algebra.  Let $\{\lambda_n\}$ be a Blaschke sequence of points in $\D$. Then $\{\lambda_n\}$ is an $EIS_{\mathcal{B}}$ sequence if and only if $\{\lambda_n\}$ is an $AIS_{\mathcal{B}}$.
\end{thm}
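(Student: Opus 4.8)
The plan is to separate the two implications: the forward direction is immediate, and the substantive direction is set up so as to fall under Proposition~\ref{Banachspace}.

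First I would handle $EIS_{\mathcal{B}}\Rightarrow AIS_{\mathcal{B}}$ exactly as in the Hilbert-space case treated in Theorem~\ref{EISiffASI}: given $\varepsilon>0$, take the open set $\mathcal{O}\supset M(\mathcal{B})$ provided by the $EIS_{\mathcal{B}}$ property and, for $a\in\ell^\infty$, replace the interpolant $f_{\mathcal{O},a}$ by $G_{\mathcal{O},a}=f_{\mathcal{O},a}/(1+\varepsilon)$. Then $\|G_{\mathcal{O},a}\|_\infty\le\|a\|_{\mathcal{O},\ell^\infty}$ and, because $f_{\mathcal{O},a}(\lambda_n)=a_n$ for $n\in I_{\mathcal{O}}$, the interpolation error is $\frac{\varepsilon}{1+\varepsilon}\|a\|_{\mathcal{O},\ell^\infty}<\varepsilon\|a\|_{\mathcal{O},\ell^\infty}$, which is the $AIS_{\mathcal{B}}$ condition.

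For $AIS_{\mathcal{B}}\Rightarrow EIS_{\mathcal{B}}$ I would realize the $AIS_{\mathcal{B}}$ hypothesis as the hypothesis of Proposition~\ref{Banachspace}. Fix the target tolerance $\varepsilon_0\in(0,1)$ and choose $\tau\in(0,1)$ with $(1-\tau)^{-1}\le 1+\varepsilon_0$. Let $\mathcal{O}\supset M(\mathcal{B})$ be the open set furnished by $AIS_{\mathcal{B}}$ for the tolerance $\tau/2$, put $I_{\mathcal{O}}=\{n:\lambda_n\in\mathcal{O}\}$, and take $X=H^\infty$, $Y=\ell^\infty(I_{\mathcal{O}})$ (its norm being precisely $\|\cdot\|_{\mathcal{O},\ell^\infty}$), and $T\colon X\to Y$ the restriction map $Tf=(f(\lambda_n))_{n\in I_{\mathcal{O}}}$, which is bounded with $\|T\|\le 1$ since $|f(\lambda_n)|\le\|f\|_\infty$. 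The $AIS_{\mathcal{B}}$ property says precisely that for every $a\in Y$ with $\|a\|_Y=1$ there is $f\in X$, $\|f\|_X\le 1$, with $\|Tf-a\|_Y<\tau/2$; hence $\sup_{\|a\|_Y=1}\inf_{\|f\|_X\le1}\|Tf-a\|_Y\le\tau/2<\tau<1$. Proposition~\ref{Banachspace}, applied with $\varepsilon=\tau$, then gives for each $a\in Y$ a function $f\in H^\infty$ with $Tf=a$ and $\|f\|_\infty\le(1-\tau)^{-1}\|a\|_Y\le(1+\varepsilon_0)\|a\|_{\mathcal{O},\ell^\infty}$, which is the $EIS_{\mathcal{B}}$ property with the same $\mathcal{O}$.

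I do not anticipate a real obstacle here, only bookkeeping: identifying $Y=\ell^\infty(I_{\mathcal{O}})$ as the right target space with the matching norm, checking boundedness of $T$, and arranging the strict inequality $<1$ in Proposition~\ref{Banachspace} by invoking $AIS_{\mathcal{B}}$ with the slightly smaller tolerance $\tau/2$. As an alternative that bypasses Proposition~\ref{Banachspace}, one may transcribe the telescoping construction from the proof of Theorem~\ref{EISiffASI}: apply $AIS_{\mathcal{B}}$ with tolerance $\frac{\varepsilon_0}{1+\varepsilon_0}$ to obtain $\mathcal{O}$, build $G_0,G_1,\dots\in H^\infty$ with interpolation errors (measured in $\|\cdot\|_{\mathcal{O},\ell^\infty}$) decaying geometrically, and sum $f=\sum_k G_k$, convergence being uniform since $H^\infty$ is complete in the supremum norm; the only point to notice is that the $AIS_{\mathcal{B}}$ definition supplies one $\mathcal{O}$ valid for all sequences, so the same $\mathcal{O}$ may be reused at every stage. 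In either route, and in contrast with the model-space arguments of Theorem~\ref{main}, no projections or Toeplitz-operator estimates are required, so the argument is essentially formal.
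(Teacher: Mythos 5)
Your proposal is correct and takes essentially the same route as the paper: the forward direction is the same one-line rescaling, and the substantive direction is handled by applying Proposition~\ref{Banachspace} to the restriction operator $T\colon H^\infty\to\ell^\infty(I_{\mathcal{O}})$, with the $AIS_{\mathcal{B}}$ property furnishing the hypothesis $\sup_{\|a\|=1}\inf_{\|f\|\le1}\|Tf-a\|<\varepsilon<1$. Your $\tau/2$ device for converting the pointwise strict inequality of the $AIS$ definition into the strict inequality needed for the supremum is a slightly more careful bookkeeping step than the paper's ``take $\varepsilon'<\varepsilon/(1+\varepsilon)$,'' but the underlying argument is identical.
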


\begin{proof} 
If a sequence is an $EIS_{\mathcal{B}}$, then it is trivially $AIS_{\mathcal{B}}$, for given $\varepsilon > 0$ we may take $G_{N, a} = \frac{f_{N, a}}{(1 + \varepsilon)}$. 

For the other direction, suppose $\{\lambda_n\}$ is an $AIS_{\mathcal{B}}$ sequence.
Let $\varepsilon > 0$ be given and let $\varepsilon^\prime < \frac{\varepsilon}{1 + \varepsilon}$. Let $\mathcal{O} \supset M(\mathcal{B})$ denote the open set we obtain from the definition of $AIS_{\mathcal{B}}$ corresponding to $\varepsilon^\prime$. Reordering the points of the sequence in $\mathcal{O}$ so that they begin at $n = 1$ and occur in the same order, we let $T: H^\infty \to \ell^\infty$ be defined by $T(g) = \{g(\lambda_{n})\}$. We let $y_\mathcal{O}$ denote the corresponding reordered sequence. Then $T$ is a bounded linear operator between Banach spaces, so we may use Proposition~\ref{Banachspace} to choose $f \in H^\infty$ so that $Tf = y_\mathcal{O}$ and $\|f\| < \frac{1}{1 - \varepsilon^\prime} \|y_\mathcal{O}\|_{\ell^\infty} < (1 + \varepsilon) \|y\|_{\mathcal{O}, \ell^\infty}$ to complete the proof.
\end{proof}

Letting $\overline{B}$ denote the set of functions with conjugate in $B$, we mention one more set of equivalences. In \cite[Theorem 1]{SW} Sundberg and Wolff showed that an interpolating sequence $\{\lambda_n\}$ is thin  near $M(\mathcal{B})$ if and only if for any bounded sequence of complex numbers $\{w_n\}$ there exists a function in $f \in H^\infty \cap \overline{B}$ such that $f(\lambda_n) = w_n$ for all $n$. 

Finally, we note that Earl (\cite[Theorem 2]{E} or \cite{E2}) proved that given an interpolating sequence for the algebra $H^\infty$ satisfying
$$
\inf_n \prod_{j \ne n} \left|\frac{z_j - z_n}{1 - \overline{z_j} z_n}\right| \ge \delta > 0$$
then for any bounded sequence $\{\omega_n\}$ and 
\begin{equation}\label{Earl}
M > \frac{2 - \delta^2 + 2(1 - \delta^2)^{1/2}}{\delta^2} \sup_n |\omega_n|
\end{equation} there exists a Blaschke product $B$ and a real number $\alpha$ so that
$$M e^{i \alpha} B(\lambda_j) = \omega_j~\mbox{for all}~j.$$

Using the results of Sundberg-Wolff and Earl, we obtain the following theorem.

\begin{thm}
\label{main_algebra} 
Let $\{\lambda_n\}$ in $\mathbb{D}$ be an interpolating Blaschke sequence  and let $\mathcal{B}$ be a Douglas algebra. The following are equivalent:
\begin{enumerate} 
\item $\{\lambda_n\}$ is an $EIS_{\mathcal{B}}$ sequence; \label{EIS_Douglas}
\item $\{\lambda_n\}$ is a $AIS_{\mathcal{B}}$ sequence; \label{AIS_Douglas}
\item $\{\lambda_n\}$ is thin near $M(\mathcal{B})$;\label{nearthin}
\item for any bounded sequence of complex numbers $\{w_n\}$ there exists a function in $f \in H^\infty \cap \overline{B}$ such that $f(\lambda_n) = w_n$ for all $n$.\label{SW}
\end{enumerate}
\end{thm}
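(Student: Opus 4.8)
The plan is to establish the cycle $\eqref{AIS_Douglas}\Rightarrow\eqref{EIS_Douglas}$ (already available via Theorem~\ref{EISiffASI_algebra}), $\eqref{EIS_Douglas}\Rightarrow\eqref{AIS_Douglas}$ (trivial), $\eqref{nearthin}\Leftrightarrow\eqref{SW}$ (this is exactly Sundberg--Wolff, \cite[Theorem 1]{SW}, quoted above), and then connect the pair $\{\eqref{EIS_Douglas},\eqref{AIS_Douglas}\}$ with the pair $\{\eqref{nearthin},\eqref{SW}\}$. So the real work is to prove, say, $\eqref{nearthin}\Rightarrow\eqref{EIS_Douglas}$ and $\eqref{AIS_Douglas}\Rightarrow\eqref{nearthin}$, using the Sundberg--Wolff characterization and Earl's theorem as the two analytic inputs.

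For $\eqref{nearthin}\Rightarrow\eqref{EIS_Douglas}$ (equivalently, starting from $\eqref{SW}$): given $\varepsilon>0$, I would first use the factorization in the definition of thin near $M(\mathcal{B})$ to peel off, for a suitably small $\eta=\eta(\varepsilon)$, a factor $b_1$ invertible in $\mathcal{B}$ so that the remaining zeros $\{\lambda_n: b_2(\lambda_n)=0\}$ form a sequence with separation constant $\delta$ as close to $1$ as desired (since $|b_2'(\lambda_n)|(1-|\lambda_n|^2)>\eta$ controls the one-point Blaschke quotients from below, and $\delta\ge\eta$, $\delta\to1$ as $\eta\to1$). Because $b_1$ is invertible in $\mathcal{B}$, its zero set stays uniformly bounded away from $M(\mathcal{B})$, so there is an open set $\mathcal{O}\supset M(\mathcal{B})$ containing none of the zeros of $b_1$; thus on $\mathcal{O}$ we only need to interpolate at zeros of $b_2$. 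Now apply Earl's theorem \eqref{Earl} to the sequence of zeros of $b_2$: given data $\{a_n\}$ with $\|a\|_{\mathcal O,\ell^\infty}=1$, there is a Blaschke product $B$ and a constant $Me^{i\alpha}$ with $Me^{i\alpha}B(\lambda_n)=a_n$ and $M<\frac{2-\delta^2+2(1-\delta^2)^{1/2}}{\delta^2}$; as $\delta\to1$ this bound tends to $1$, so for $\eta$ close enough to $1$ we get an interpolant of norm $<1+\varepsilon$, which is the $EIS_{\mathcal{B}}$ conclusion. One should double-check that reordering/indexing the zeros of $b_2$ and passing between $\|a\|_{\mathcal O,\ell^\infty}$ and $\sup_n|a_n|$ over that subsequence costs nothing.

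For the reverse direction $\eqref{AIS_Douglas}\Rightarrow\eqref{SW}$ (hence $\eqref{nearthin}$), I would argue much as in Theorem~\ref{EISiffASI_algebra}: fix a bounded sequence $\{w_n\}$, and run the iteration/open-mapping argument (via Proposition~\ref{Banachspace} applied to $T:H^\infty\to\ell^\infty$, $Tg=\{g(\lambda_n)\}$ restricted to the indices in the open set $\mathcal{O}$) to produce $f\in H^\infty$ with $f(\lambda_n)=w_n$ on $\mathcal{O}$ and good norm control. The extra point needed for $\eqref{SW}$ is that $f$ can be taken in $\overline{B}\cap H^\infty$, i.e. with $\bar f\in B$; here one invokes that $\mathcal{B}$ is generated by $H^\infty$ and conjugates of interpolating Blaschke products invertible in $\mathcal{B}$ (the Chang--Marshall description recalled in the introduction), together with the fact that the data is only prescribed on $\mathcal{O}\supset M(\mathcal{B})$, so that $f$ may be multiplied/corrected by a unit of $\mathcal{B}$ to land in $\overline{B}$ without disturbing the interpolation on $\mathcal{O}$. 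Alternatively, and more cleanly, one can use that $\eqref{AIS_Douglas}\Leftrightarrow\eqref{EIS_Douglas}$ is already known and derive $\eqref{SW}$ directly from $\eqref{EIS_Douglas}$ by the same Earl-type norm bookkeeping read backwards.

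The main obstacle I anticipate is the passage from an interpolant that is merely in $H^\infty$ and agrees with the data \emph{on the open set $\mathcal O$} to the conclusions phrased over \emph{all} $n$ in \eqref{nearthin} and \eqref{SW}: one must control what happens at the finitely-or-infinitely many $\lambda_n\notin\mathcal O$, which requires knowing that those points are uniformly far from $M(\mathcal{B})$ and can therefore be absorbed into the factor $b_1$ invertible in $\mathcal{B}$. Making the two notions of ``tail'' — the open-set tail $I_{\mathcal O}$ and the factorization tail $\{b_2=0\}$ — match up, uniformly in $\varepsilon$, is the delicate bookkeeping step; everything else is either quoted (Sundberg--Wolff, Earl, Chang--Marshall) or a routine repetition of the arguments already given for Theorem~\ref{EISiffASI_algebra} and Theorem~\ref{main}.
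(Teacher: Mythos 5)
Your plan for the implications $\eqref{EIS_Douglas}\Leftrightarrow\eqref{AIS_Douglas}$, $\eqref{nearthin}\Leftrightarrow\eqref{SW}$, and $\eqref{nearthin}\Rightarrow\eqref{EIS_Douglas}$ is sound and closely mirrors the paper. For $\eqref{nearthin}\Rightarrow\eqref{EIS_Douglas}$ you apply Earl's theorem once directly to the data sequence $a$, whereas the paper first produces the ``P.~Beurling'' system $\{f_j^\gamma\}$ with $f_j^\gamma(\lambda_k)=\delta_{jk}$ and $\sup_z\sum_j|f_j^\gamma(z)|$ controlled, and then forms $\sum_j a_jf_j^\gamma$; both work, and your version is arguably more economical. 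Your observation that $|b_2'(\lambda_n)|(1-|\lambda_n|^2)>\eta$ is literally the separation constant $\delta(b_2)>\eta$, feeding into \eqref{Earl}, is also correct.

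Where your proposal has a genuine gap is the implication $\eqref{EIS_Douglas}\Rightarrow\eqref{nearthin}$. You correctly flag the difficulty --- matching the ``open-set tail'' $I_{\mathcal O}$ with the ``factorization tail'' $\{b_2=0\}$, and showing the complementary Blaschke product $b_1$ is invertible in $\mathcal{B}$ --- but you do not resolve it; the suggestion to ``multiply/correct by a unit of $\mathcal{B}$'' via the Chang--Marshall description is not an argument and does not produce the required factorization. Concretely, two steps are missing. First, the lower bound $(1-|\lambda_n|^2)|b_2'(\lambda_n)|>\eta$ must be extracted from the $EIS_{\mathcal{B}}$ hypothesis: one applies the definition to the Kronecker delta sequence supported at $\lambda_n$, obtains $f_{\mathcal{O},n}$ with $\|f_{\mathcal{O},n}\|_\infty\le 1+\eta_1$ vanishing at all $\lambda_j\in\mathcal{O}$ with $j\ne n$, factors out the corresponding Blaschke subproduct, and evaluates at $\lambda_n$ to deduce $\prod_{j\ne n,\ b_2(\lambda_j)=0}\rho(\lambda_n,\lambda_j)\ge 1/(1+\eta_1)>\eta$. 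Second, the invertibility of $b_1$ in $\mathcal{B}$ requires the topological argument: because $\{\lambda_n\}$ is interpolating, $b_1$ vanishes at $x\in M(H^\infty)$ exactly when $x$ lies in the closure of its discrete zeros; choosing $\mathcal{V}$ open with $M(\mathcal{B})\subset\mathcal{V}\subset\overline{\mathcal{V}}\subset\mathcal{O}$ shows the zeros of $b_1$ (all outside $\mathcal{O}$) cannot accumulate on $M(\mathcal{B})$, so $|b_1|$ is bounded below there, which gives invertibility in $\mathcal{B}$. Without both of these steps --- neither of which follows from the open-mapping machinery of Theorem~\ref{EISiffASI_algebra} nor from rereading Earl's bound --- the implication is not established.
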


\begin{proof}
The equivalence between \eqref{EIS_Douglas} and \eqref{AIS_Douglas} is contained in Theorem \ref{EISiffASI_algebra}. The equivalence of  \eqref{nearthin} and \eqref{SW} is the Sundberg-Wolff theorem.

We next prove that if a sequence is thin near $M(\mathcal{B})$, then it is an $EIS_{\mathcal{B}}$ sequence.  We let $b$ denote the Blaschke product associated to the sequence $\{\lambda_n\}$.  

Given $\varepsilon>0$, choose $\gamma$ so that $$\left(\frac{1 + \sqrt{1 - \gamma^2}}{\gamma}\right)^2 < 1 + \varepsilon.$$  Choose a factorization $b = b_1^\gamma b_2^\gamma$ so that $\overline{b_1^\gamma} \in \mathcal{B}$ and $\delta(b_2) = \inf (1 - |\lambda|^2)|b_2^\gamma \,^\prime(\lambda)| > \gamma$. Since $|b_1^\gamma| = 1$ on $M(\mathcal{B})$ and $\gamma < 1$, there exists an open set $\mathcal{O} \supset M(\mathcal{B})$ such that $|b_1^\gamma| > \gamma$ on $\mathcal{O}$. Note that if $b(\lambda) = 0$ and $\lambda \in \mathcal{O}$, then $b_2(\lambda) = 0$.

The condition on $b_2^\gamma$ coupled with Earl's Theorem (see \eqref{Earl}), gives rise to functions $\{f_k^\gamma\}$ in $H^\infty$ (!), and hence in $\mathcal{B}$ so that
\begin{equation}\label{estimate}
f_j^\gamma(\lambda_k) = \delta_{jk} \, ~\mbox{whenever}~ b_2^\gamma(\lambda_k) = 0 ~\mbox{and}~\sup_{z \in \mathbb{D}}\sum_{j}\abs{f_j^\gamma(z)}\leq \left(\frac{1 + \sqrt{1 - \gamma^2}}{\gamma}\right)^2.
\end{equation}

Now given $a\in\ell^\infty$, choose the corresponding P. Beurling functions (as in \eqref{estimate}) and let
$$
f^\gamma_{\mathcal{O}, a}=\sum_{j} a_j f_j^\gamma.
$$
By construction we have that $f_{\mathcal{O},a}(\lambda_n)=a_n$ for all $\lambda_n\in\mathcal{O}$.  Also, by Earl's estimate \eqref{estimate}, we have that
$$
\norm{f_{\mathcal{O},a}^{\gamma}}_{\infty} \leq (1+\varepsilon)\|a\|_\infty.
$$
Thus, \eqref{nearthin} implies \eqref{EIS_Douglas}.
\color{black}

Finally, we claim \eqref{EIS_Douglas} implies \eqref{nearthin}. Suppose $\{\lambda_n\}$ is a $EIS_{\mathcal{B}}$ sequence. Let $0 < \eta < 1$ be given and choose $\eta_1$ with $1/(1 + \eta_1) > \eta$,  a function $f \in H^\infty$ and $\mathcal{O} \supset M(\mathcal{B})$ open in $M(H^\infty)$ with 
$$f_{\mathcal{O}, n}(\lambda_m) = \delta_{nm}~\mbox{for}~\lambda_m \in \mathcal{O}~\mbox{and}~\|f\|_{\mathcal{O}, n} \le 1 + \eta_1.$$
Let $b_2$ denote the Blaschke product with zeros in $\mathcal{O}$, $b_1$ the Blaschke product with the remaining zeros and let
$$f_{\mathcal{O}, n}(z) = \left(\prod_{j \ne n: b_2(\lambda_j) = 0} \frac{z - \lambda_j}{1 - \overline{\lambda_j}z}\right) h(z),$$ for some $h \in H^\infty.$ Then $\|h\|_{\infty} \le 1 + \eta_1$ and
$$1 = |f_{\mathcal{O}, n}(\lambda_n)| = \left|\left(\prod_{j \ne n; b_2(\lambda_j) = 0} \frac{\lambda_n - \lambda_j}{1 - \overline{\lambda_j}\lambda_n}\right) h(\lambda_n)\right| \le (1 + \eta_1) \prod_{j \ne n} \left|\frac{\lambda_n - \lambda_j}{1 - \overline{\lambda_j}\lambda_n}\right|.$$ Therefore
$$(1 - |\lambda_n|^2)|b_2^\prime(\lambda_n)| = \prod_{j \ne n: b_2(\lambda_j) = 0} \left|\frac{\lambda_n - \lambda_j}{1 - \overline{\lambda_j}\lambda_n}\right| \ge 1/(1 + \eta_1) > \eta.$$ 

Now because we assume that $\{\lambda_n\}$ is interpolating, the Blaschke product $b = b_1 b_2$ with zeros at $\{\lambda_n\}$ will vanish at $x \in M(H^\infty)$ if and only if $x$ lies in the closure of the zeros of $\{\lambda_n\}$, \cite{Hoffman}*{p. 206} or \cite{Garnett}*{p. 379}. Now, if we choose $\mathcal{V}$ open in $M(H^\infty)$ with $M(\mathcal{B}) \subset \mathcal{V} \subset \overline{\mathcal{V}} \subset \mathcal{O}$, then $b_1$ has no zeros in ${\mathcal{V}} \cap \mathbb{D}$ and, therefore, no point of $M(\mathcal{B})$ can lie in the closure of the zeros of $b_1$. So $b_1$ has no zeros on $M(\mathcal{B})$.  Thus we see that $b_1$ is bounded away from zero on $M(\mathcal{B})$ and, consequently, $b_1$ is invertible in $\mathcal{B}$.
\end{proof}

We note that we do not need the full assumption that $b$ is interpolating; it is enough to assume that $b$ does not vanish identically on a Gleason part contained in $M(\mathcal{B})$. Our goal, however, is to illustrate the difference in the Hilbert space and uniform algebra setting and so we have stated the most important setting for our problem. \color{black}



\begin{bibdiv}

\begin{biblist}

\bib{AM}{book}
{
    AUTHOR = {Agler, Jim and McCarthy, John E.},
     TITLE = {Pick interpolation and {H}ilbert function spaces},
    SERIES = {Graduate Studies in Mathematics},
    VOLUME = {44},
 PUBLISHER = {American Mathematical Society, Providence, RI},
      YEAR = {2002},
     PAGES = {xx+308},
      ISBN = {0-8218-2898-3},
   MRCLASS = {47-02 (30D55 30E05 30H05 32A70 46E22 47A20 47A57)},
  MRNUMBER = {1882259 (2003b:47001)},
MRREVIEWER = {D. Sarason},
}

\bib{AG}{article}
{
    AUTHOR = {Axler, Sheldon}, 
    Author = {Gorkin, Pamela},
     TITLE = {Divisibility in {D}ouglas algebras},
   JOURNAL = {Michigan Math. J.},
    VOLUME = {31},
      YEAR = {1984},
    NUMBER = {1},
     PAGES = {89--94},
      ISSN = {0026-2285},
}

\bib{CFT}{article}
{
    AUTHOR = {Chalendar, I.},
    Author = {Fricain, E.},
    Author = {Timotin, D.},
     TITLE = {Functional models and asymptotically orthonormal sequences},
   JOURNAL = {Ann. Inst. Fourier (Grenoble)},
    VOLUME = {53},
      YEAR = {2003},
    NUMBER = {5},
     PAGES = {1527--1549}}
     
     \bib{C}{article}
     {
     AUTHOR = {Chang, Sun Yung A.},
     TITLE = {A characterization of {D}ouglas subalgebras},
   JOURNAL = {Acta Math.},
    VOLUME = {137},
      YEAR = {1976},
    NUMBER = {2},
     PAGES = {82--89}}

\bib{E}{article}{
Author = {Earl, J. P},
Title = {On the interpolation of bounded sequences by bounded functions},
Journal = {J. London Math. Soc.},
Volume = {2},
Year = {1970},
Pages = {544--548}
}

\bib{E2}{article}{
 AUTHOR = {Earl, J. P.},
     TITLE = {A note on bounded interpolation in the unit disc},
   JOURNAL = {J. London Math. Soc. (2)},
    VOLUME = {13},
      YEAR = {1976},
    NUMBER = {3},
     PAGES = {419--423}
}

\bib{F}{article}{
  AUTHOR = {Fricain, Emmanuel},
   TITLE = {Bases of reproducing kernels in model spaces},
  JOURNAL = {J. Operator Theory},
 VOLUME = {46},
 YEAR = {2001},
 NUMBER = {3, suppl.},
  PAGES = {517--543}}

\bib{Garnett}{book}
    {author = {Garnett, John B.},
     title = {Bounded analytic functions},
    series= {Pure and Applied Mathematics},
    volume = {96},
publisher = {Academic Press Inc. [Harcourt Brace Jovanovich Publishers]},
   ADDRESS = {New York},
      YEAR = {1981},
     PAGES = {xvi+467},
      ISBN = {0-12-276150-2}}

\bib{GM}{article}
{
    AUTHOR = {Gorkin, Pamela},
    Author={Mortini, Raymond},
     TITLE = {Asymptotic interpolating sequences in uniform algebras},
   JOURNAL = {J. London Math. Soc. (2)},
    VOLUME = {67},
      YEAR = {2003},
    NUMBER = {2},
     PAGES = {481--498}
     }

\bib{GPW}{article}{
AUTHOR = {Gorkin, Pamela},
AUTHOR = {Pott, Sandra},
AUTHOR = {Wick, Brett},
Title = {Thin Sequences and Their Role in $H^p$ Theory, Model Spaces, and Uniform Algebras},
Journal = {Revista Matem\'atica Iberoamericana},
YEAR = {to appear}
}

\bib{GIS}{article}{
 AUTHOR = {Guillory, Carroll}, Author={Izuchi, Keiji }, Author ={Sarason, Donald},
     TITLE = {Interpolating {B}laschke products and division in {D}ouglas
              algebras},
   JOURNAL = {Proc. Roy. Irish Acad. Sect. A},
    VOLUME = {84},
      YEAR = {1984},
    NUMBER = {1},
     PAGES = {1--7},
}   

\bib{Hoffman}{book}{
    AUTHOR = {Hoffman, Kenneth},
     TITLE = {Banach spaces of analytic functions},
    SERIES = {Prentice-Hall Series in Modern Analysis},
 PUBLISHER = {Prentice-Hall, Inc., Englewood Cliffs, N. J.},
      YEAR = {1962},
     PAGES = {xiii+217}
}

\bib{HIZ}{article}{
    AUTHOR = {Hosokawa, Takuya}
AUTHOR = {Izuchi, Keiji}
AUTHOR =  {Zheng, Dechao},
     TITLE = {Isolated points and essential components of composition
              operators on {$H^\infty$}},
   JOURNAL = {Proc. Amer. Math. Soc.},
  FJOURNAL = {Proceedings of the American Mathematical Society},
    VOLUME = {130},
      YEAR = {2002},
    NUMBER = {6},
     PAGES = {1765--1773},
      ISSN = {0002-9939},
     CODEN = {PAMYAR},
   MRCLASS = {47B33 (30H05)},
  MRNUMBER = {1887024 (2003d:47033)},
MRREVIEWER = {Peter R. Mercer},
       DOI = {10.1090/S0002-9939-01-06233-5},
       URL = {http://dx.doi.org/10.1090/S0002-9939-01-06233-5},
}

\bib{LSUSW}{article}{
  author={Lacey, Michael T.},
  author={Sawyer, Eric T.},
  author={Shen, Chun-Yen},
  author={Uriarte-Tuero, Ignacio},
  author={Wick, Brett D.},
  title={Two Weight Inequalities for the Cauchy Transform from $ \mathbb{R}$ to $ \mathbb{C} _+$},
  eprint={http://arxiv.org/abs/1310.4820},
  journal={J. Inst. Math. Jussieu},
  status={submitted},
  year={2014},
  pages={1--43}
}

\bib{M}{article}{
AUTHOR = {Marshall, Donald E.},
     TITLE = {Subalgebras of {$L^{\infty }$} containing {$H^{\infty }$}},
   JOURNAL = {Acta Math.},
    VOLUME = {137},
      YEAR = {1976},
    NUMBER = {2},
     PAGES = {91--98}
     }
     
\bib{NV}{article}{
   author={Nazarov, F.},
   author={Volberg, A.},
   title={The Bellman function, the two-weight Hilbert transform, and
   embeddings of the model spaces $K_\theta$},
   note={Dedicated to the memory of Thomas H.\ Wolff},
   journal={J. Anal. Math.},
   volume={87},
   date={2002},
   pages={385--414}
}
     
\bib{NOCS}{article}{
   author={Nicolau, Artur},
   author={Ortega-Cerda, Joaquim}, 
   author={Seip, Kristian},
   title={The constant of interpolation},
   journal={Pacific J. Math.},
   volume={213},
   date={2004},
   number={2},
   pages={389--398}
}

\bib{nikolski}{book}{
   author={Nikol{\cprime}ski{\u\i}, N. K.},
   title={Treatise on the shift operator},
   series={Grundlehren der Mathematischen Wissenschaften [Fundamental
   Principles of Mathematical Sciences]},
   volume={273},
   note={Spectral function theory;
   With an appendix by S. V. Hru\v s\v cev [S. V. Khrushch\"ev] and V. V.
   Peller;
   Translated from the Russian by Jaak Peetre},
   publisher={Springer-Verlag},
   place={Berlin},
   date={1986},
   pages={xii+491}
}

\bib{MR2417425}{article}{
   author={Petermichl, Stefanie},
   author={Treil, Sergei},
   author={Wick, Brett D.},
   title={Carleson potentials and the reproducing kernel thesis for
   embedding theorems},
   journal={Illinois J. Math.},
   volume={51},
   date={2007},
   number={4},
   pages={1249--1263}
}


\bib{SS}{article}{
   author={Shapiro, H. S.},
   author={Shields, A. L.},
   title={On some interpolation problems for analytic functions},
   journal={Amer. J. Math.},
   volume={83},
   date={1961},
   pages={513--532}
}

\bib{Sarason1}{article}{
AUTHOR = {Sarason, Donald},
     TITLE = {Algebras of functions on the unit circle},
   JOURNAL = {Bull. Amer. Math. Soc.},
    VOLUME = {79},
      YEAR = {1973},
     PAGES = {286--299},
      }
      

\bib{SW}{article}{
author={Sundberg, C.},
author={Wolff, T.},
title = {Interpolating sequences for $QA_B$},
journal={Trans. Amer. Math. Soc.},
date={1983},
pages={551--581}}

\bib{V}{article}{
AUTHOR = {Vol{\cprime}berg, A. L.},
     TITLE = {Two remarks concerning the theorem of {S}. {A}xler, {S}.-{Y}.
              {A}. {C}hang and {D}. {S}arason},
   JOURNAL = {J. Operator Theory},
    VOLUME = {7},
      YEAR = {1982},
    NUMBER = {2},
     PAGES = {209--218}}

\bib{W}{article}{
   author={Wolff, Thomas H.},
   title={Two algebras of bounded functions},
   journal={Duke Math. J.},
   volume={49},
   date={1982},
   number={2},
   pages={321--328}
}

\end{biblist}
\end{bibdiv}

\end{document}